\title{Cremona linearizations of some classical varieties}
\author[C.~Ciliberto, M.~A.~Cueto, M.~Mella, K.~Ranestad, P.~Zwiernik]{Ciro Ciliberto\and Maria Angelica Cueto\and Massimiliano Mella\and Kristian Ranestad\and Piotr Zwiernik}
\address{Universit\`a degli Studi di Roma Tor Vergata, Via della
  Ricerca Scientifica 1, 00133, Rome,
  Italy.}\email{cilibert@axp.mat.uniroma2.it}
\address{
Mathematics Department, 
Columbia University,  MC 4403, 
2990 Broadway,
New York, NY 10027, USA
}
\email{macueto@math.columbia.edu}
\address{Dipartimento di Matematica e Informatica\\
Universit\`a di Ferrara\\
Via Machiavelli 35\\
44100 Ferrara, Italia} \email{mll@unife.it}
\address{University of Oslo, PB 1053 Blindern, 0316, Oslo, Norway}\email{ranestad@math.uio.no}
\address{University of California Berkeley, Department of Statistics,
CA 94720-3860, Berkeley, USA }\email{pzwiernik@berkeley.edu}
\date{\today}
\subjclass[2010]{Primary 14E25 ; Secondary 14E08, 14N05, 14E05}
\keywords{Cremona transformations, Birational maps,   Segre and Veronese varieties, Cumulants, Secant cumulants}
\theoremstyle{plain}
\newtheorem{theorem}{Theorem}[section]
\newtheorem{proposition}[theorem]{Proposition}
\newtheorem{lemma}[theorem]{Lemma}
\theoremstyle{definition}
\newtheorem{definition}[theorem]{Definition}
\newtheorem*{claim}{Claim}
\newtheorem{exampleth}[theorem]{Example}
\newenvironment{example}{\begin{exampleth}}{\hfill $\diamond$\\ \end{exampleth}}
\theoremstyle{remark}
\newtheorem{remark}[theorem]{Remark}
\DeclareMathOperator{\rk}{rk}
 \DeclareMathOperator{\Sec}{Sec}
\newcommand{\QED}{\ifhmode\unskip\nobreak\fi\quad {\rm Q.E.D.}} 
\newcommand{\f}{\varphi}
\newcommand{\C}{\mathbb{C}}
\renewcommand{\H}{\mathcal{H}}
\renewcommand{\L}{\mathcal{L}}
\newcommand{\Af}{\mathbb{A}}
\renewcommand{\P}{\mathbb{P}}
\newcommand{\rat}{\dasharrow}
\newcommand{\Segre} {{\rm Seg}}
\begin{document}

\maketitle

\begin{abstract} 
  In this paper we present an effective method for {linearizing}
  rational varieties of codimension at least two under Cremona
  transformations, starting from a given parametrization.  Using these
  linearizing Cremonas, we simplify the equations of secant and
  tangential varieties of some classical examples, including Veronese,
  Segre and Grassmann varieties.  We end the paper by treating the
  special case of the Segre embedding of the $n$--fold product of
  projective spaces, where cumulant Cremonas, arising from algebraic
  statistics, appear as specific cases of our general construction.
\end{abstract}

 \section*{Introduction}\label{sec:intro}
 Computations in Algebraic Geometry may be very sensitive to the choice of
 coordinates. Often, by picking the appropriate coordinate
 system, calculations or expressions can be greatly simplified.
 Change of variables with rational functions are classically known as
\emph {Cremona transformations} and give a huge flexibility when
 dealing with systems of polynomial equations.   In this paper, we
 focus on varieties and maps defined over $\C$.
 
 Cremona transformations, one of the most venerable topics in algebraic geometry,
were widely studied in the second half of XIX and the first of XX century. 
They became again fashionable in recent times, after the spectacular developments of birational 
 geometry due to Mori, Kawamata, Koll\'ar, et al. and even more recently by Birkar, Cascini, Hacon, McKernan (see \cite {dr} and references therein). However, despite this great progress, Cremona transformations still reserve a great deal of surprises. The aim of the present, mainly expository, paper is to show how useful they can be in studying some 
 classical geometric invariants of complex projective varieties, linking previous independent work of the last three authors (see \cite {GHPRS, MP, pwz-2010-cumulants}). For a recent survey on the properties of the group of Cremona transformations, we refer the reader to~\cite{SegreCantat}.
 
The work of Mella and Polastri \cite{MP} shows that any rational variety $X$ of
codimension at least two in $\P^ r$ can be \emph{linearized} by a Cremona
transformation: a \emph{linearizing Cremona map} is a transformation that maps $X$
birationally to a linear subspace of $\P^ r$. In
Section~\ref{sec:crem-equiv} we provide a proof of the
aforementioned result, close to the original one in spirit, but best suited for effective computations.

 If  $X$ in $\P^ r$ admits a birational linear projection to a linear
subspace, a Cremona linearizing map can be directly constructed as a \emph{triangular Cremona
transformation} (see \S~\ref{sec:trianguar}). 
In fact, in Section~\ref{sec:constr-crem-transf} we present a
systematic approach to Cremona transformations linearizing a
rational variety. They turn out to be building blocks for
the \emph{cumulant Cremona transformations}, which we present in
Section~\ref{sec:cum}.  In Section \ref {sec:crem-repr} we discuss the effect of linearizing Cremona
transformations on tangential and secant varieties.  We devote Section~\ref{sec:examples} to the  study of a number of classical examples, including Veronese, Segre and Grassmann varieties, and in these cases we observe an interesting feature of linearizing Cremonas: they tend to simplify also tangential and secant varieties. 

The final Section~\ref{sec:cum} is, as we said, devoted to  {cumulant Cremona transformations} which appear already, in a simple case, in Section~\ref{sec:examples}. Cumulants arise from algebraic statistics (see \cite {pwz-2010-cumulants}) and can be viewed as the choice of preferable coordinates in which  varieties coming from algebraic
statistics simplify. For instance, Sturmfels and Zwiernik~\cite{pwz-2011-bincum} used
cumulants to simplify the equations of the Segre embedding of the
$n$-fold product of $\P^1$ and of its tangent variety. More recent results in the same direction are contained in \cite{MM,MOZ}.  Cumulants are
particular instances of  linearizing Cremona transformations. 
We conclude by indicating how to 
generalize some combinatorial formulas in \cite{MOZ, pwz-2011-bincum,pwz-2010-cumulants}. 


\section{Construction of some Cremona transformations}
\label{sec:constr-crem-transf}

In this section we present some recipes for constructing 
Cremona transformations. We focus on two specific closely-related types:
\emph{monoidal extensions} and \emph{triangular Cremona transformations}. All constructions in this paper may be
seen as iterated applications of monoidal extensions 
as in 
Section~\ref{sec:mono-extens-rati} (see also \cite {CS}).

\subsection{Basics on Cremona transformations} A \emph{Cremona transformation} is a birational map
 \begin{equation}
   \label{eq:1}
 \varphi\colon \P^ r\dasharrow \P^r,  \qquad
 [x_0,\ldots,x_r]\mapsto [F_0(x_0,\ldots,x_r),\ldots, 
 F_r(x_0,\ldots,x_r)],
\end{equation}
where $F_i(x_0,\ldots,x_r)$ are coprime homogeneous
polynomials of the same degree $\delta>0$, for $0\leqslant i\leqslant r$. The
inverse map is also a Cremona transformation, and it is defined by
coprime homogeneous polynomials $G_i(x_0,\ldots,x_r)$ of degree $\delta'>0$, for $0\leqslant i\leqslant r$. In this case, we say that
$\varphi$ is a \emph{$(\delta, \delta')$--Cremona transformation}.  The subscheme 
${\rm Ind}(\varphi):=\{F_i(x_0,\ldots,x_r)=0\}_{0\leqslant i\leqslant r}$ is the {\em indeterminacy locus} of $\varphi$.  Since the
composition of $\varphi$ and its inverse is the identity, we have
\[G_i(F_0(x_0,\ldots,x_r), \ldots, F_r(x_0,\ldots,x_r))=\Phi\cdot x_i,
\,\,\, \text {for}\,\,\, 0\leqslant i\leqslant r\] where $\Phi$ is a
homogeneous polynomial of degree $\delta\cdot \delta'-1$. The
hypersurface ${\rm Fund}(\varphi):=\{\Phi=0\}$ is the
\emph{fundamental locus} of $\varphi$ and its support is the
\emph{reduced fundamental locus} ${\rm Fund}_{\rm red}(\varphi)$. In
general one cannot reconstruct ${\rm Fund}(\varphi)$ from ${\rm
  Fund}_{\rm red}(\varphi)$, except when ${\rm Fund}_{\rm
  red}(\varphi)$ is irreducible. Indeed, in this case, we recover
${\rm Fund}(\varphi)$ from its multiplicity value $(\delta\cdot
\delta'-1)/e$, where $e$ is the degree of ${\rm Fund}_{\rm
  red}(\varphi)$.
By construction, ${\rm Ind}(\varphi)\subset {\rm Fund}(\varphi)$ and $\phi$ is one-to-one on the complement of ${\rm Fund}_{\rm red}(\varphi)$.

Often times in this paper, the loci ${\rm Fund}_{\rm red}(\varphi)$
and ${\rm Fund}_{\rm red}(\varphi^ {-1})$ induced by the Cremona
transformation $\varphi$ are the same
hyperplane. 
In those cases, we can see $\varphi$ as a polynomial automorphism
$\varphi_a\colon \Af^ r\to \Af^ r$ (often denoted by $\varphi$)
whose extension to $ \P^ r$ contains the hyperplane at infinity as its
fundamental locus.

\subsection{Monoids} Let $X\subset \P^ r$ be a hypersurface of degree
$d$ where $r\geqslant 2$. We say that $X$
is a \emph{monoid} with \emph{vertex} $p\in \P^ r$ if $X$ is irreducible and $p$ is a point in $X$
of multiplicity exactly $d-1$. Note that a monoid can have more than one
vertex.  If we choose projective coordinates in such a way that
$p=[0,\ldots,0,1]$, then the defining equation of $X$ is
\begin{equation*}\label{eq:monoid}
  f(x_0,\ldots,x_{r})=f_{d-1}(x_0,\ldots,x_{r-1})\,x_{r}+
  f_{d}(x_0,\ldots,x_{r-1})=0,\end{equation*} 
where $f_{d-1}$ and $f_d$  are homogeneous polynomials of degree $d-1$ and $d$ respectively and $f_{d-1}$ is nonzero. The hypersurface $X$ is irreducible if and only if  $f_{d-1}$ and $f_d$ are coprime.

A monoid $X$ is rational. Indeed, the
projection of $X$ from its vertex $p$ onto a hyperplane $H$ not
passing through $p$ is a birational map $\pi\colon X\dasharrow H\cong \P^
{r-1}$. If $H$ has equation $x_r=0$, then the inverse map
$\pi^ {-1}\colon \P^ {r-1} \dasharrow X $ is given by 
\[
[x_0,\ldots, x_{r-1}] 
\mapsto [{f_{d-1}(x_0,\ldots,x_{r-1})}x_0,\ldots, {f_{d-1}(x_0,\ldots,x_{r-1})}x_{r-1}, -f_{d}(x_0,\ldots,x_{r-1})].
\]
The map $\pi$ is called the \emph{stereographic projection} of $X$ from
$p$.  Its indeterminacy locus  is $p$. Each line through $p$ contained in $X$ gets contracted to a point under $\pi$.  The set of all such lines is defined by the
equations $\{f_d=f_{d-1}=0\}$. This is the indeterminacy locus
of $\pi^ {-1}$, whereas the hypersurface of $H$ with equation
$\{x_r=f_{d-1}=0\}$ is contracted to $p$ by the map $\pi^ {-1}$.

\subsection{Monoidal extensions of rational maps}
\label{sec:mono-extens-rati}
Let $\omega\colon \P^ r\dasharrow \P^ r$ be a dominant rational map
defined, in homogeneous coordinates, as in~\eqref{eq:1}. 
The homogeneous polynomials $F_0, \ldots, F_r$ have the same degree
$\delta>0$ and are coprime. We construct a \emph{monoidal extension} $\Omega$ of  $\omega$ as
follows. First, we embed $\P^ r$ in $\P^ {r+1}$ as the hyperplane
 $H=\{x_{r+1}=0\}$ and we consider the point $p=[0,\ldots, 0, 1] \in \P^
{r+1}$. 
Fix an integer $d\ge \delta$, a nonzero polynomial
$h(x_0,\ldots,x_r)$ of degree $d-\delta$ and an irreducible {monoid} of degree
$d$ with {vertex} at $p$ defined by
\[
  f(x_0,\ldots,x_{r+1})=f_{d-1}(x_0,\ldots,x_{r})\,x_{r+1}+
  f_{d}(x_0,\ldots,x_{r})=0.\]
Then we let $\Omega\colon \P^
{r+1}\dasharrow \P^ {r+1}$ be defined by
\begin{equation*}
[x_0,\ldots, x_{r+1}] \mapsto  [h(x_0,\ldots,x_r) F_0(x_0,\ldots,x_r),\ldots, h (x_0,\ldots,x_r)  F_r(x_0,\ldots,x_r), f(x_0,\ldots,x_{r+1})].\label{eq:3}
\end{equation*}
Note that $p$ is an indeterminacy point of $\Omega$. If $\pi: \P^
{r+1}\dasharrow \P^ r$ is the projection from $p$ to $H$, we have
\begin{equation}\label{eq:proj}\pi\circ \Omega =\omega\circ \pi. \end{equation}

\begin{lemma}\label{lem:tranf} The map $\Omega\colon \P^
  {r+1}\dasharrow \P^ {r+1}$ is dominant and has the same degree as
  $\omega$. Hence $\Omega$ is a Cremona transformation if and
  only if $\omega$ is.
\end{lemma}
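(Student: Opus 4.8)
The plan is to pass to function fields and exploit the commuting square $\pi\circ\Omega=\omega\circ\pi$ from~\eqref{eq:proj}. For a dominant rational self-map of $\P^N$ the \emph{degree} equals the degree of the induced extension of function fields, and such a map is a Cremona transformation precisely when this degree is $1$. Thus all three assertions reduce to a single statement: $\Omega$ is dominant and $[\C(\P^{r+1}):\Omega^*\C(\P^{r+1})]=[\C(\P^r):\omega^*\C(\P^r)]$. Indeed, granting the equality of degrees, $\Omega$ is birational iff $\Omega^*$ has degree $1$ iff $\omega^*$ has degree $1$ iff $\omega$ is birational, so the final clause is immediate.

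To carry this out I would dehomogenize by $x_0$, writing $y_i=x_i/x_0$, so that $K:=\C(\P^{r+1})=L(y_{r+1})$ with $L:=\C(y_1,\dots,y_r)=\C(\P^r)$ the function field of the base of the projection $\pi$ and $y_{r+1}$ the fibre coordinate. Reading off $\Omega$ gives $\Omega^*Y_i=F_i/F_0=\omega^*y_i$ for $1\le i\le r$, while the last coordinate yields $\Omega^*Y_{r+1}=f/(hF_0)=a\,y_{r+1}+b$ with $a,b\in L$ obtained by dehomogenizing $f_{d-1},f_d,h,F_0$; here $a\neq0$ since $f_{d-1}$, $h$ and $F_0$ are all nonzero. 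Consequently the image field is $K_\Omega:=\Omega^*K=L_\omega(t)$, where $L_\omega:=\omega^*L$ and $t:=a\,y_{r+1}+b$. Because $a\neq0$, the element $t$ is transcendental over $L$, hence over $L_\omega$, and $L(t)=L(y_{r+1})=K$. This already yields dominance: $\operatorname{trdeg}_\C K_\Omega=\operatorname{trdeg}_\C L_\omega+1=r+1$, so the closure of the image of $\Omega$ is all of $\P^{r+1}$.

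It then remains to compute $[K:K_\Omega]=[L(t):L_\omega(t)]$. The key technical input is the multiplicativity of degree under adjoining a transcendental: for a finite extension $L/L_\omega$ and a variable $t$ one has $[L(t):L_\omega(t)]=[L:L_\omega]$, any basis of $L$ over $L_\omega$ remaining a basis of $L(t)$ over $L_\omega(t)$. Granting this, $[K:K_\Omega]=[L:L_\omega]=\deg\omega$, which is exactly $\deg\Omega=\deg\omega$, and the lemma follows.

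The heart of the argument, and the step I expect to require the most care, is the equality $[L(t):L_\omega(t)]=[L:L_\omega]$ together with the verification that $t$ is genuinely transcendental over $L$. Geometrically this expresses the fact that $\Omega$ contributes no extra sheets along the fibres of $\pi$: on a general fibre line through $p$ the map $\Omega$ restricts to the affine-linear, hence degree-one, assignment $y_{r+1}\mapsto a\,y_{r+1}+b$, so a point of $\P^{r+1}$ in general position has exactly one $\Omega$-preimage on each of the $\deg\omega$ fibres lying over its $\omega$-preimages. The only place where this could break down is the vanishing of $a$, which is excluded precisely by the monoid hypothesis $f_{d-1}\neq0$; this is why the construction insists on a genuine monoid with vertex $p$ rather than an arbitrary hypersurface through $p$.
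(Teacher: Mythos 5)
Your argument is correct and follows essentially the same route as the paper: both reduce the statement to comparing function-field extension degrees via the commuting square $\pi\circ\Omega=\omega\circ\pi$, observing that the last coordinate of $\Omega$ is affine-linear (with nonzero leading coefficient, thanks to $f_{d-1}\neq 0$) in the fibre variable $y_{r+1}$ over the image field of $\omega$. You merely spell out in more detail the steps the paper leaves implicit, namely that $t=a\,y_{r+1}+b$ generates the same extension as $y_{r+1}$ and that $[L(t):L_\omega(t)]=[L:L_\omega]$.
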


\begin{proof}
  By definition, the degree of the map $\Omega$ coincides with the
  degree of the induced field extension. Let
  $y=[y_0,\ldots,y_{r+1}]\in \P^ {r+1}$ be a general point and let
  $y'=[y_0,\ldots,y_{r}]=\pi(y)$.  The rational map $\Omega$ may be
  written as $\Omega(y)=[F_0(y'):\ldots:F_r(y'):
  (f_{d-1}(y')y_{r+1}+f_d(y'))/h(y')]$. In particular, $\Omega(\C(y_0,\ldots, y_{r}))=\omega(\C(y_0,\ldots, y_r))\subset \C(y_0,\ldots, y_r)$, while $\Omega(y)_{r+1}$ is linear in $y_{r+1}$ over $\Omega(\C(y_0,\ldots, y_r))$. The field extension has degree $[\C(y_0,\ldots, y_{r+1}):\Omega(\C(y_0,\ldots, y_{r}))]=[\C(y_0,\ldots, y_r):\omega(\C(y_0, \ldots, y_{r}))]$ and the lemma follows.
\end{proof}

The  indeterminacy locus of $\Omega$ (as a scheme) is the union of the cone over the locus of 
indeterminacy of $\omega$ with vertex $p$ intersected with the monoid $\{f=0\}$ and the
 codimension two  subvariety $\{h=f=0\}$.
 The reduced fundamental locus of $\Omega$ is the  union of the hypersurface
 $\{h=0\}$ and  the cone over the fundamental locus of $\omega$ with
 vertex $p$.

\subsection{Triangular Cremona transformations}\label{sec:trianguar}
\label{sec:triang-crem-transf} \emph{Triangular Cremona transformations} are obtained as
iterated applications of monoidal extensions, as we now explain.
Consider a rational map $\tau: \mathbb P^ r\dasharrow \mathbb P^ r$
defined, in affine coordinates over $\{x_0\neq 0\}$, by formulas of the type
\[ (x_1,\ldots,x_r)\to (f_1(x_1),\dots, f_i(x_1,\ldots, x_i), \ldots
f_r(x_1,\ldots,x_r))\] where each $f_i(x_1,\ldots, x_i)\in
K(x_1,\ldots,x_{i-1})[x_i]$ is nonconstant and linear in $x_i$, for
$1\leqslant i\leqslant r$.  If $f_i(x_1,\ldots, x_i)\in
K[x_1,\ldots, x_i]$ for all $1\leqslant i\leqslant r$, the
indeterminacy locus of $\tau$ is contained in the \emph{hyperplane at
  infinity} $\{x_0=0\}$.  Any such map $\tau$ is birational,  with
inverse of the same type.  To find $\tau^ {-1}$, we have
to solve the system
\[y_i= f_i(x_1,\ldots, x_i), \quad 1\leqslant i\leqslant r\]
in the indeterminates $x_1,\ldots, x_r$. This can be done stepwise as
follows. From the linear expression  $y_1=f_1(x_1)$ we find a
linear polynomial $g_1$ such that $x_1=g_1(y_1)$. Given $i>1$, assume
 we know that
\begin{equation}\label{eq:iteration}
x_h=g_h(y_1,\ldots,y_h), \quad {\rm for}\quad  
1\leqslant h< i\leqslant n
\end{equation}
where $g_h(y_1,\ldots,y_h)\in K(y_1,\ldots,y_{h-1})[y_h]$ is linear in
$y_h$.  From $y_i= f_i(x_1,\ldots, x_i)$ we obtain the expression
$x_i=\xi(x_1,\ldots,x_{i-1},y_i)$, where
$\xi(x_1,\ldots,x_{i-1},y_i)\in K(x_1,\ldots,x_{i-1})[y_i]$ is linear
in $y_i$. Substituting $x_h$ from  \eqref
{eq:iteration}, we conclude that $x_i=g_i(y_1,\ldots,y_i)$ with
$g_i(y_1,\ldots,y_i)\in K(y_1,\ldots,y_{i-1})[y_i]$ of degree 1 in
$y_i$.

\begin{example} Fix an integer $n\ge 2$ and use the same notation as
  above.  The following Cremona quadratic transformation $\omega_n$ of $\P^
  {{n+1}\choose 2}$ is triangular
\[[x_0, \ldots , x_i,\ldots, x_{ij},\ldots]
\to [x_0^ 2,\ldots , x_0 x_i,\ldots, x_0
x_{ij}-x_ix_j,\ldots], \,\,\, \text{where}\,\,\, 1\le i<j\le n.\]
The inverse is 
\[[y_0, \ldots , y_i,\ldots, y_{ij},\ldots]
\to [y_0^ 2,\ldots , y_0 y_i,\ldots, y_0
y_{ij}+y_iy_j,\ldots]\]

From a geometric viewpoint, $\omega_n$ is defined by a linear system
$\mathcal L$ of quadrics as follows.  Consider the coordinate
hyperplane $\Pi=\{ x_0=0\}$. Let $S$
be the linear subspace of $\Pi$ with equations
$\{x_0=x_1=\ldots=x_n=0\}$ and let $S'$ be the the
complementary subspace in $\Pi$ with equations
$\{x_0=x_{12}=\ldots=x_{ij}=\ldots=x_{(n-1)n}=0\}$, where $1\leqslant
i<j\leqslant n$.  Then $\mathcal L$ cuts out  the complete linear system of
quadric cones on $\Pi$ that are singular along $S$ and that pass through the $n$
independent points $p_i\in S'$, where $p_i$ is the torus-fixed point with all coordinates $0$ but $x_i$, with $1\leqslant i\leqslant n$. After splitting off $\Pi$ from $\mathcal L$, the residual system
consists of all hyperplanes containing $S$.
\end{example}

\section{Cremona equivalence}
\label{sec:crem-equiv}

An irreducible variety $X\subset \P^ r$ is \emph{Cremona linearizable} (CL) if there is a \emph{linearizing Cremona transformation} of $\P^r$ which maps $X$ birationally onto a linear subspace.   It is a consequence of 
Theorem \ref {thm:mp} (presented in~\cite {MP}) that, if $X$ is rational of dimension $n\leqslant r-2$ in $\P^ r$, then $X$ is CL.
In this section we recall this theorem and present a
slightly different proof. 

\subsection{Monoids and Cremona transformations}\label{ssec:monoids}
Let $X\subset \P^ r$ be a monoid of degree $d$.  
Let $p_1, p_2\in X$ be two vertices, let 
$H_1,H_2$ be hyperplanes with $p_i\not \in H_i$, and  { consider the
stereographic projections of $X$ from $p_i$, which is the restriction of the projection
$\pi_i\colon \P ^r\dasharrow H_i$ from $p_i$, with
$i=1,2$.} The map
\[
\pi_{X,p_1,p_2}:=\pi_2\circ \pi_1^ {-1}\colon H_1\dasharrow H_2
\] is a Cremona transformation. If $p_1=p_2=p$, then
$\pi_{X,p}:=\pi_{X,p,p}$ does not depend on $X$, being the (linear)
\emph{perspective} of $H_1$ to $H_2$ with center $p$. From now on, we
restrict to the case when $p_1\neq p_2$. In this setting, the map
$\pi_{X,p_1,p_2}$ depends on $X$ and is in general nonlinear.  In the
following we assume that $H_1$ and $H_2$ have equations $x_r=0$ and
$x_{r-1}=0$ respectively, and $p_1=[0,\ldots,0,1], p_2=[0,\ldots,
0,1,0]$. The defining equation of $X$ has the form
\begin{equation*}\label{eq:bimon}
f_d(x_0,\ldots, x_{r-2})+x_{r-1}g_{d-1}(x_0,\ldots,
x_{r-2})+x_rh_{d-1}(x_0,\ldots, x_{r-2})+ x_{r}x_{r-1}f_{d-2}(x_0,
\ldots, x_{r-2})=0.
\end{equation*}
Then 
\[
\pi_{X,p_1,p_2}( [x_0,\ldots, x_{r-1}] )=  [(f_{d-2}x_{r-1}+h_{d-1})x_0,\ldots,
(f_{d-2}x_{r-1}+h_{d-1})x_{r-2},-f_d-x_{r-1}g_{d-1}].
\]


{

\begin{lemma}  \label{lem:mp16_1} Let $Z\subset \P^ r$, with $r\geqslant 3$, be an irreducible variety of  dimension $r-2$ and let $p\in \P^ r$ be  such that the projection of $Z$ from $p$ is birational to its image. For $d\gg 0$ there is a monoid of degree $d$ with vertex $p$, containing $Z$ but not containing the cone $C_p(Z)$
  over $Z$ with vertex $p$.
\end{lemma}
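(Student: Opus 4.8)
The plan is to realize the desired monoid as the general member of a suitable linear system of monoids containing $Z$, and to prove its irreducibility by Bertini's theorem. First I would normalize coordinates so that $p=[0,\ldots,0,1]$ and let $\pi\colon\P^r\map\P^{r-1}$ be the projection from $p$ onto $H=\{x_r=0\}$. Since $\dim Z=r-2$ and $\pi|_Z$ is birational onto its image, $Z':=\overline{\pi(Z)}$ is an irreducible hypersurface of $\P^{r-1}$, say $Z'=\{g=0\}$ with $g\in\C[x_0,\ldots,x_{r-1}]$ irreducible of degree $e$; the cone $C_p(Z)$ is then the hypersurface $\{g=0\}\subset\P^r$. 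Birationality of $\pi|_Z$ lets me write $Z$ as the graph $x_r=\psi$ over $Z'$, with $\psi=a/b$ for $a,b\in\C[x_0,\ldots,x_{r-1}]$ coprime, $\deg a=\deg b+1$, and $g\nmid b$ (else $\psi$ would be undefined on $Z'$). A monoid of degree $d$ with vertex $p$ has equation $f=f_{d-1}x_r+f_d$, and substituting the graph shows that it contains $Z$ exactly when
\[ f_{d-1}\,a+f_d\,b\equiv 0 \pmod g. \]
Let $\mathcal L_d$ denote the linear system of all such pairs $(f_{d-1},f_d)$.

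Next I would reinterpret the two remaining constraints. The monoid contains the cone $C_p(Z)$ iff $g\mid f$, i.e.\ iff $g\mid f_{d-1}$ and $g\mid f_d$; and it has multiplicity exactly $d-1$ at $p$ iff $f_{d-1}\neq 0$. So it suffices to produce, for $d\gg0$, an \emph{irreducible} member of $\mathcal L_d$ with $g\nmid f$ and $f_{d-1}\neq0$. Two explicit families lie in $\mathcal L_d$: the products $A_u=(b\,x_r-a)\,u$ with $u\in\C[x_0,\ldots,x_{r-1}]$ of degree $d-1-\deg b$ (for which $f_{d-1}a+f_db=0$ identically), and the products $B_{s,t}=g\,(s\,x_r+t)$ with $\deg s=d-1-e$ and $\deg t=d-e$. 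The $A_u$ are divisible by $bx_r-a$ but not by $g$ when $g\nmid u$, whereas the $B_{s,t}$ are divisible by $g$; this already witnesses that $\mathcal L_d$ is not forced to contain either $\{g=0\}$ or $\{bx_r-a=0\}$, and that its general member has $f_{d-1}\neq0$.

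The heart of the argument is the irreducibility of the general member of $\mathcal L_d$, for which I would verify the two hypotheses of Bertini's irreducibility theorem in characteristic zero: that $\mathcal L_d$ has no fixed component, and that its associated rational map $\phi_{\mathcal L_d}$ has image of dimension $\geq2$. For the latter, $\mathcal L_d$ contains the whole system $g\cdot\mathcal M_{d-e}$, where $\mathcal M_{d-e}$ is the complete system of monoids of degree $d-e$ with vertex $p$; dividing off the common factor $g$ gives $\phi_{g\mathcal M_{d-e}}=\phi_{\mathcal M_{d-e}}$, and the monomials $x_0^{d-e},x_1^{d-e},x_2^{d-e}$ already exhibit two independent ratios in its image, so $\dim\mathrm{im}\,\phi_{\mathcal L_d}\geq2$ once $r\geq3$. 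The delicate step, which I expect to be the main obstacle, is ruling out a fixed component $\{h=0\}$. Writing $h=h_1x_r+h_0$, I would split into two cases: if $h_1=0$, then $h_0\mid A_u$ for all $u$ forces $h_0\mid b$ and $h_0\mid a$ (e.g.\ by specializing $u$ to powers of the variables), whence $h_0$ is constant since $a,b$ are coprime; if $h_1\neq0$, then $h$ genuinely involves $x_r$ and therefore cannot divide the members $B_{0,t}=g\,t$, which do not involve $x_r$, a contradiction.

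Finally, choosing $d\gg0$ so that all of the above families are nonempty, Bertini yields an irreducible general member $f$ of $\mathcal L_d$. Since $g$ is not a fixed component, $g\nmid f$, so $\{f=0\}$ contains $Z$ but not $C_p(Z)$; and since $f_{d-1}\neq0$ its multiplicity at $p$ is exactly $d-1$, so it is a monoid in the required sense. The degenerate case $Z\subset H$ (where one takes $a=0$, $b=1$, and the family $A_u$ becomes $\{u\,x_r\}$) is handled by the identical argument, completing the proof.
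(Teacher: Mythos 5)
Your argument is correct, but it takes a genuinely different route from the paper's. The paper works on the blow-up of $\P^r$ at $p$ and runs an asymptotic dimension count: it computes $\dim\vert dH-(d-1)E\vert$, bounds from below the dimension of the subsystem $M'_d$ of monoids containing $Z$ via $h^0(Z',\O_{Z'}(d(H-E)))=\frac{\delta}{(r-2)!}d^{r-2}+O(d^{r-3})$ --- this is where the birationality hypothesis enters, through bigness and nefness of $H-E$ on the proper transform $Z'$ --- and bounds from above the dimension of the subsystem $M''_d$ of monoids containing the cone (which lies in $\vert(d-\delta-1)(H-E)+H\vert$), concluding $\dim M'_d>\dim M''_d$ for $d\gg0$. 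You instead make the system explicit: birationality lets you write $Z$ as the graph $x_r=a/b$ over the image hypersurface $\{g=0\}$, you exhibit the two families $(bx_r-a)u$ and $g\cdot(sx_r+t)$ inside the system, and you conclude by Bertini. The paper's count is shorter and coordinate-free, and it quantifies how large the family of good monoids is; your version avoids asymptotic cohomology, makes the role of the birationality hypothesis transparent (with degree $\geq 2$ onto the image every monoid through $Z$ would be forced to contain the cone, since it meets each line through $p$ in at most one further point), and --- a point the paper's dimension count leaves implicit --- actually verifies that the chosen member is irreducible with multiplicity exactly $d-1$ at $p$, as the paper's definition of a monoid requires. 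Your fixed-component and non-composedness checks are sound: every member is linear in $x_r$, so the two cases $\deg_{x_r}h=0$ (forcing $h\mid a$ and $h\mid b$) and $\deg_{x_r}h=1$ (incompatible with dividing $g\,t$) exhaust all possibilities, and I see no gap.
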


\begin{proof}    Let $V\to \P^ r$ be the  blow--up of $\P^ r$ at $p$. We denote by $E$ the exceptional divisor and by $H$ the proper transform of a general hyperplane of $\P^ r$ and by $Z'$ the proper transform of $Z$. 

Consider $M_d =\vert dH-(d-1)E\vert= \vert (d-1)(H-E)+H\vert$,
i.e. the proper transform on $V$  of the  linear system of monoids we
are interested in. We have
\begin{equation}\label{eq:mon}
\dim (M_d)=\frac{2d^{r-1}}{(r-1)!}+\frac{(r-1)d^{r-2}}{(r-2)!}+O(d^{r-3}).
\end{equation}
Since the projection of $Z$ from $p$ is birational, the line bundle $\mathcal O_{Z'}(H-E)$ is big and nef. Then 
by \cite [Theorem~1.4.40, Vol.\ I, p.\ 69]{laz} it follows that
\begin{equation*}\label{eq:av}
h^ 0(Z',\mathcal O_{Z'}(d(H-E)))= \frac  {\delta} {(r-2)!} d^ {r-2}+ O(d^ {r-3}), \,\,\, \text {for}\,\,\, d\gg 0
\end{equation*} 
where $\delta=(H-E)^ {r-2}\cdot Z'>0$ is the degree of the variety obtained under the projection of $Z$ from $p$, i.e. the degree of  the cone $C_p(Z)$. Thus, if $M'_d$ is the sublinear system of $M_d$ of the divisors containing $Z'$, then 
$$\dim (M'_d)\geqslant \dim(M_d)- h^ 0(Z',\mathcal O_{Z'}(d(H-E)))=
 \frac{2d^{r-1}}{(r-1)!}+\frac{(r-1-\delta)d^{r-2}}{(r-2)!}+O(d^{r-3}). $$

 We let $M''_d$ be the sublinear system of $M'_d$ of the divisors
 containing the proper transform $Y$ of the cone $C_p(Z)$, which is a
 hypersurface of degree $\delta$, i.e. $Y\in \vert \delta
 (H-E)\vert$. Hence, $M''_d\subseteq \vert (d-\delta-1)(H-E)+H\vert$ so
 by \eqref {eq:mon} we have
$$\dim (M''_d)\leq  \frac{2(d-\delta)^{r-1}}{(r-1)!}+\frac{(r-1)(d-\delta)^{r-2}}{(r-2)!}+O(d^{r-3})=
\frac{2d^{r-1}}{(r-1)!}+\frac{(r-1-2\delta)d^{r-2}}{(r-2)!}+O(d^{r-3}).
$$
Hence 
$$\dim(M'_d)-\dim(M''_d)=\frac{\delta d^{r-2}}{(r-2)!}+O(d^{r-3})>0, \quad \text{for}\quad d\gg 0,$$
as we wanted to show.\end{proof}

\begin{lemma} \label{lem:mp16} Let $Z\subset \P^ r$ be an irreducible
  variety of positive dimension $n\leqslant r-3$ and let $p_1,p_2\in
  \P^ r$ be distinct points such that the projection of $Z$ from the
  line $\ell$ joining $p_1$ and $p_2$ is birational to its image. For
  $d\gg 0$ there is a monoid of degree $d$ with vertices $p_1$ and
  $p_2$, containing $Z$ but not containing any of the cones $C_{i}(Z)$
  over $Z$ with vertices $p_i$, for $i=1,2$.
\end{lemma}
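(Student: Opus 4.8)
The plan is to exhibit the required monoid as a \emph{general} member of the linear system of degree-$d$ monoids with vertices $p_1,p_2$ that contain $Z$, after showing that containment of either cone carves out a \emph{proper} subsystem. Choose coordinates as in \S\ref{ssec:monoids}, so that $p_1=[0,\ldots,0,1]$, $p_2=[0,\ldots,0,1,0]$ and $\ell=\{x_0=\cdots=x_{r-2}=0\}$; I may assume $p_1,p_2\notin Z$ and that $Z$ lies in none of the coordinate hyperplanes used below. A degree-$d$ form has multiplicity $\geqslant d-1$ at both $p_1,p_2$ precisely when it is linear in $x_{r-1}$ and in $x_r$, i.e.
\[
F=f_d+x_{r-1}g_{d-1}+x_rh_{d-1}+x_{r-1}x_rf_{d-2}=A+x_rB=A'+x_{r-1}B',
\]
with $f_d,g_{d-1},h_{d-1},f_{d-2}\in\C[x_0,\ldots,x_{r-2}]$ of the indicated degrees, where $B:=h_{d-1}+x_{r-1}f_{d-2}$ and $B':=g_{d-1}+x_rf_{d-2}$ are the coefficients of $x_r$ and of $x_{r-1}$. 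These $F$ form a linear system $M_d$; counting coefficients gives $\dim M_d\sim\tfrac{4}{(r-2)!}\,d^{r-2}$, so the subsystem $M'_d\subseteq M_d$ of monoids containing $Z$ has dimension tending to $\infty$, because vanishing on $Z$ imposes at most $h^0(Z,\mathcal O_Z(d))=O(d^{n})$ conditions and $n\leqslant r-3$.

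Next I would translate the cone conditions into the vanishing of these coefficients. Restricting $F=A+x_rB$ to the line $\overline{p_1z}$ joining $p_1$ to a point $z\in Z$ yields $A(z)+x_rB(z)$, which is linear in the coordinate $x_r$ along that line; hence $\overline{p_1z}\subseteq\{F=0\}$ if and only if $A(z)=B(z)=0$. Letting $z$ run over $Z$, a member $F\in M_d$ contains $C_1(Z)$ if and only if $A|_Z=B|_Z=0$, while it merely contains $Z$ if and only if the single function $A|_Z+x_r|_Z\,B|_Z$ vanishes. Interchanging $x_{r-1}$ and $x_r$ gives the analogous criterion for $C_2(Z)$ in terms of $A',B'$. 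Thus it suffices to find, for $d\gg0$, a member of $M'_d$ with $B|_Z\neq0$ (so that $F\not\supseteq C_1(Z)$) and, symmetrically, one with $B'|_Z\neq0$.

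The crux, and the step I expect to be hardest, is producing such a witness; here the hypothesis on the projection from $\ell$ enters. Birationality means every rational function on $Z$ is a ratio of forms in $x_0,\ldots,x_{r-2}$; applied to $x_r/x_0$ (after relabelling so that $x_0|_Z\neq0$) it gives forms $\hat P,\hat Q\in\C[x_0,\ldots,x_{r-2}]$ of equal degree with $\hat Q|_Z\neq0$ and $x_r\hat Q-x_0\hat P\in I(Z)$. Multiplying by $x_i^{\,d-1-\deg\hat Q}$ for a coordinate $x_i$ ($i\leqslant r-2$) with $x_i|_Z\neq0$ produces a degree-$d$ form
\[
F^\ast=x_i^{\,d-1-\deg\hat Q}\,(x_r\hat Q-x_0\hat P)\in I(Z),
\]
which is free of $x_{r-1}$ and linear in $x_r$. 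As an $x_{r-1}$-free form it is a cone with vertex $p_2$, so its multiplicity there is $d\geqslant d-1$, while at $p_1$ it is $d-1$; hence $F^\ast\in M_d$, indeed $F^\ast\in M'_d$. Its $x_r$-coefficient $x_i^{\,d-1-\deg\hat Q}\hat Q$ does not vanish on $Z$, so $F^\ast\not\supseteq C_1(Z)$, proving $M''_{d,1}:=\{F\in M'_d:F\supseteq C_1(Z)\}\subsetneq M'_d$. The symmetric construction, using $x_{r-1}/x_0$, gives $M''_{d,2}\subsetneq M'_d$. The difficulty is exactly this passage: turning ``$F$ does not contain the cone'' into the non-vanishing of a single coefficient, and then manufacturing a form in $I(Z)$ that is linear in $x_r$ with a prescribed, non-trivial $x_r$-coefficient out of the birational parametrisation.

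Finally I would conclude by genericity. As $M'_d$ is a projective space of positive dimension over $\C$ and $M''_{d,1},M''_{d,2}$ are proper linear subspaces, a general $F\in M'_d$ avoids their union. For such $F$ neither $B|_Z$ nor $B'|_Z$ vanishes; in particular $B,B'\neq0$, so $F$ has multiplicity exactly $d-1$ at each $p_i$ and contains neither $C_1(Z)$ nor $C_2(Z)$, and by Bertini a general such $F$ is irreducible for $d\gg0$. This $F$ is a monoid of degree $d$ with vertices $p_1,p_2$ containing $Z$ but neither cone, as desired.
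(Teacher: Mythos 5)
Your proof is correct, and it takes a genuinely different route from the paper's. The paper proceeds by two reductions: first to the case $n=r-3$ via a generic projection, and then to the single-vertex, codimension-two situation of Lemma~\ref{lem:mp16_1} by projecting $Z$ from each $p_i$, producing a monoid in the target hyperplane, and taking the cone back with vertex $p_{3-i}$; Lemma~\ref{lem:mp16_1} itself is proved by an asymptotic dimension count on the blow-up at the vertex, comparing $\dim M'_d$ with an upper bound for $\dim M''_d$ obtained from the observation that divisors containing the cone lie in $\vert(d-\delta-1)(H-E)+H\vert$. You instead work directly in $\P^r$ with both vertices at once: you identify $M_d$ as the forms of degree at most one in each of $x_{r-1},x_r$, recognize that containing $C_i(Z)$ is the single extra linear condition that the coefficient of the corresponding variable vanish on $Z$, and, rather than estimating the codimension of that condition, you exhibit an explicit member of $M'_d$ violating it, manufactured from the birationality of the projection from $\ell$ by writing $x_r/x_0\vert_Z=\hat P/\hat Q$ with $\hat P,\hat Q$ forms in $x_0,\dots,x_{r-2}$. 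Your argument is more elementary and more effective---no blow-up, no asymptotic Riemann--Roch, no reduction steps---which fits the computational spirit the paper advertises; what the paper's route buys is a quantitative statement (the codimension of $M''_d$ in $M'_d$ grows like $\delta d^{r-2}/(r-2)!$) and the isolation of the one-vertex Lemma~\ref{lem:mp16_1} for independent use.

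Two loose ends, both minor. First, you should say why the coordinates $x_0,\dots,x_{r-2}$ cutting out $\ell$ can be chosen so that $Z$ lies in none of the hyperplanes $\{x_0=0\}$, $\{x_i=0\}$ you use: this is clear because $Z\not\subseteq\ell$, so a general hyperplane through $\ell$ misses $Z$. Second, the irreducibility of a general member of $M'_d\setminus(M''_{d,1}\cup M''_{d,2})$ deserves more than an appeal to Bertini; a useful observation is that any reducible $F\in M_d$ must have a factor which is a cone with vertex $p_1$ or $p_2$, and a cone with vertex $p_i$ containing the irreducible $Z$ automatically contains $C_i(Z)$, so from a reducible $F$ outside $M''_{d,1}\cup M''_{d,2}$ one can at least extract an irreducible monoid with the desired properties of some degree $d'\leqslant d$. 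The paper's own proof is equally silent on irreducibility, so this is a shared omission rather than a defect of your approach.
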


\noindent\emph{Proof.} We start with the following

\begin{claim} It suffices to prove the assertion for $n=r-3$.
\end{claim}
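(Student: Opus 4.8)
The plan is to prove the \emph{Claim} first, reducing to the extremal case $n=r-3$ by a general linear projection, and then to treat that case by a dimension count on the blow--up of $\P^r$ at the two vertices, closely following the pattern of Lemma~\ref{lem:mp16_1}.

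\medskip

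\noindent\emph{Reduction (the Claim).} Assume $n<r-3$ and that the assertion holds whenever the relevant variety has dimension equal to ambient dimension minus $3$. I would choose a general linear subspace $\Gamma\subset\P^r$ of dimension $r-n-4$, which is disjoint from $Z$ and from the line $\ell$, and let $\pi\colon\P^r\dasharrow\P^{n+3}$ be the projection with center $\Gamma$. For general $\Gamma$ the restriction $\pi|_Z$ is birational onto its image $\overline Z:=\pi(Z)$, which has dimension $n=(n+3)-3$, while $\pi$ is an isomorphism near $p_1,p_2,\ell$, so that $\overline p_i:=\pi(p_i)$ are distinct and $\pi(\ell)$ is the line $\overline\ell$ they span. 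Since the projection of $Z$ from $\ell$ is birational, so is the projection of $\overline Z$ from $\overline\ell$. Applying the extremal case in $\P^{n+3}$ produces a monoid $\overline M$ of degree $d$ with vertices $\overline p_1,\overline p_2$, containing $\overline Z$ but neither cone $C_i(\overline Z)$. I then set $M:=\pi^{-1}(\overline M)$, the cone over $\overline M$ with vertex $\Gamma$: it is irreducible of the same degree $d$, has multiplicity $d-1$ at each $p_i$, and contains $Z$ because $\pi(Z)=\overline Z\subset\overline M$. The decisive point is that a linear projection commutes with coning from a point of its target, so $\pi(C_i(Z))=C_i(\overline Z)$; as $C_i(\overline Z)\not\subseteq\overline M$, any point of $C_i(\overline Z)\smallsetminus\overline M$ lifts to a point of $C_i(Z)\smallsetminus M$, whence $C_i(Z)\not\subseteq M$. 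This settles the Claim.

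\medskip

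\noindent\emph{The extremal case $n=r-3$.} Now $C_i(Z)$ has dimension $r-2$. Let $V\to\P^r$ be the blow--up at $p_1,p_2$ with exceptional divisors $E_1,E_2$, let $H$ be the pullback of a hyperplane, and let $Z'$ be the proper transform of $Z$; I assume $p_i\notin Z$, so $Z'\cap(E_1\cup E_2)=\varnothing$. The monoids of degree $d$ with vertices $p_1,p_2$ form the linear system
\[ M_d=\vert dH-(d-1)(E_1+E_2)\vert=\vert (d-1)(H-E_1-E_2)+H\vert. \]
From the bimonoid normal form I would read off $\dim M_d=\tfrac{4}{(r-2)!}\,d^{r-2}+O(d^{r-3})$. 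Since the projection of $Z$ from $\ell$ is birational, $(H-E_1-E_2)\vert_{Z'}$ is big and nef, so by \cite[Theorem~1.4.40, Vol.\ I]{laz} one gets $h^0(Z',\mathcal{O}_{Z'}(d(H-E_1-E_2)))=\tfrac{\delta}{(r-3)!}\,d^{r-3}+O(d^{r-4})$, with $\delta=(H-E_1-E_2)^{r-3}\cdot Z'>0$ the degree of the image of $Z$ under projection from $\ell$. As $H\vert_{Z'}=(H-E_1-E_2)\vert_{Z'}$, the sublinear system $M'_d\subseteq M_d$ of monoids containing $Z$ satisfies $\dim M'_d\ge\dim M_d-\tfrac{\delta}{(r-3)!}\,d^{r-3}+O(d^{r-4})$.

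\medskip

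\noindent\emph{Avoiding the cones, and the main obstacle.} It remains to bound the sublinear systems $M''_{i,d}\subseteq M'_d$ of monoids that contain $C_i(Z)$ and to show $\dim M'_d>\dim M''_{i,d}$ for $i=1,2$; a general member of $M'_d$ then lies in neither $M''_{i,d}$ and gives the required monoid. Writing the bimonoid in coordinates with $p_1=[0,\dots,0,1]$ and $p_2=[0,\dots,0,1,0]$, and using that $C_i(Z)$ is itself a cone with vertex $p_i$, I would observe that containing $C_i(Z)$ forces both $x_r$--graded parts of the defining equation to vanish on the base $\overline Z_i$, the projection of $Z$ from $p_i$; since $E_j\vert_{Z'}=0$ these bases again have dimension $r-3$ and degree $\delta$, so this amounts to \emph{two} independent vanishing conditions on a variety of dimension $r-3$, i.e.\ roughly $\tfrac{2\delta}{(r-3)!}\,d^{r-3}$ conditions to leading order, versus the single such condition imposed by passage through $Z$. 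Consequently $\dim M'_d-\dim M''_{i,d}\ge\tfrac{\delta}{(r-3)!}\,d^{r-3}+O(d^{r-4})>0$ for $d\gg0$. The main obstacle is exactly this comparison: because $C_i(Z)$ is a cone, imposing passage through it costs only $\Theta(d^{r-3})$ conditions rather than $\Theta(d^{r-2})$, so the inequality is genuinely subleading and both the precise coefficient $4$ in $\dim M_d$ and the strict positivity $\delta>0$ coming from the birational--projection hypothesis must be tracked carefully to conclude.
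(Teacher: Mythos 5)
Your \emph{Reduction (the Claim)} paragraph is correct and is essentially the paper's own argument: project from a general $(r-n-4)$--plane $\Gamma$ to $\P^{n+3}$ (under which $Z$, $p_1$, $p_2$, $\ell$ behave as you describe and the birationality of the projection from the line is preserved), apply the extremal case downstairs, and take the cone over the resulting monoid with vertex $\Gamma$, observing that containment of $C_i(Z)$ in that cone would force containment of $C_i(\overline Z)$ in $\overline M$. The rest of your proposal concerns the extremal case $n=r-3$ itself, which lies outside this Claim (and where the paper proceeds differently, via a second reduction to single--vertex monoids in the hyperplanes $H_i$ together with Lemma~\ref{lem:mp16_1}, rather than your two--point blow--up dimension count).
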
 

\begin{proof} [Proof of the Claim] Consider the projection of $\P^ r$
  to $\P^ {n+3}$ from a general linear subspace $\Pi$ of dimension
  $r-n-4$ 
and call $Z', p'_1,p'_2, \ell'$ the projections of $Z,p_1,p_2, \ell$
respectively. Then $Z'$ is birational to $Z$ and it is still true that
the projection of $Z'$ form $\ell'$ is birational to its image. The
dimension of $Z'$ is $n-3$.

Assume   the assertion holds for $Z', p'_1,p'_2$ and let $F'\subset \P^ {n+3}$ be a monoid of degree $d\gg 0$ with vertices
$p'_1,p'_2$ containing $Z'$ but not  $C_{i}(Z')$, for $i=1,2$. Let
$F\subset \P^ r$ be the cone over $F'$ with vertex $\Pi$. Then $F$ is
a monoid with vertices $p_1,p_2$ containing $Z$. It does not contain
either one of $C_{i}(Z)$, for $i=1,2$, otherwise $F'$ would contain
one of the cones $C_{i}(Z')$, for $i=1,2$,  contradicting our
hypothesis on $F'$.\end{proof}

We can thus assume from now on that $n=r-3$. Fix two hyperplanes $H_1$
and $H_2$, where $p_1\notin H_1$ and $p_2\notin H_2$.  Let $Z_1$ and
$Z_2$ be the projections from $p_1$ and $p_2$ to $H_1$ and $H_2$,
respectively.
We set $p'_{3-i}:=\pi_i(p_{3-i})$, for $i=1,2$. Our result follows by
Lemma~\ref {lem:mp16_1} and the following claim:

\begin{claim} It suffices to prove that for $d\gg 0$, there is a monoid of degree $d$ in $H_i$ with
  vertex $p'_i$ containing $Z_i$ but not containing the cone $C(Z_i)$
  over $Z_i$ with vertex $p'_i$, for $i=1,2$.
\end{claim}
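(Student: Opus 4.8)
The plan is to build the required bimonoid simply by \emph{adding the equations} of the two monoids supplied by the claim, after regarding them as forms on $\P^r$. Choose coordinates so that $p_1=[0,\dots,0,1]$ and $p_2=[0,\dots,0,1,0]$, as in the bimonoid equation displayed before Lemma~\ref{lem:mp16_1}; then $H_1=\{x_r=0\}$, $H_2=\{x_{r-1}=0\}$, and $\P^{r-2}=H_1\cap H_2=\{x_{r-1}=x_r=0\}$ carries the coordinates $\overline x=(x_0,\dots,x_{r-2})$. Write $\ell=\langle p_1,p_2\rangle$ and let $\overline Z=\pi_\ell(Z)\subset\P^{r-2}$. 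The monoid $W_1\subset H_1$ of the claim has vertex $\pi_1(p_2)=[0,\dots,0,1]\in H_1$, hence an equation
\[
W_1\colon\quad x_{r-1}\,\gamma_{d-1}(\overline x)+\phi_d(\overline x)=0,
\]
and likewise $W_2\subset H_2$, with vertex $\pi_2(p_1)$, has an equation $x_r\,\eta_{d-1}(\overline x)+\psi_d(\overline x)=0$, where $\gamma_{d-1},\eta_{d-1},\phi_d,\psi_d$ are forms in $\overline x$ of the indicated degrees (with $\gamma_{d-1},\eta_{d-1}\neq0$, since $W_1,W_2$ are genuine monoids). I would then set $X=\{F=0\}$ with
\[
F=x_{r-1}\,\gamma_{d-1}(\overline x)+x_r\,\eta_{d-1}(\overline x)+\phi_d(\overline x)+\psi_d(\overline x),
\]
which is exactly the general bimonoid above, with $g_{d-1}=\gamma_{d-1}$, $h_{d-1}=\eta_{d-1}$, $f_d=\phi_d+\psi_d$ and $f_{d-2}=0$; in particular $X$ has multiplicity $d-1$ at both $p_1$ and $p_2$.

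The verification of the three required properties is then immediate. For $Z\subseteq X$: since $W_1$ does not involve $x_r$ and $W_2$ does not involve $x_{r-1}$, for $z\in Z$ one regroups $F(z)=\big[z_{r-1}\gamma_{d-1}(\overline z)+\phi_d(\overline z)\big]+\big[z_r\eta_{d-1}(\overline z)+\psi_d(\overline z)\big]$, and the first bracket is $W_1$ evaluated at $\pi_1(z)\in Z_1\subseteq W_1$, the second is $W_2$ evaluated at $\pi_2(z)\in Z_2\subseteq W_2$; both vanish, so $F(z)=0$. For the cones I would read off the monoid data of $X$ at each vertex. At $p_1$ the coefficient of $x_r$ in $F$ is $\eta_{d-1}(\overline x)$, so $X\supseteq C_1(Z)$ would force $\eta_{d-1}$ to vanish on $Z_1$, hence on $\overline Z$ (as $\eta_{d-1}$ depends only on $\overline x$); but this is excluded because $W_2$ does not contain the cone over $Z_2$ with vertex $\pi_2(p_1)$, which forces $\eta_{d-1}|_{\overline Z}\not\equiv0$. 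Symmetrically the coefficient of $x_{r-1}$ is $\gamma_{d-1}(\overline x)$, and $W_1\not\supseteq C(Z_1)$ gives $\gamma_{d-1}|_{\overline Z}\not\equiv0$, so $C_2(Z)\not\subseteq X$. Thus the \emph{crossover} — the monoid in $H_2$ controls the cone at $p_1$, and vice versa — is what makes both non-containments hold simultaneously.

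The main obstacle I anticipate is not any of these checks but \emph{irreducibility} of $X$, which is needed for it to qualify as a monoid. Writing $F=x_r\,\eta_{d-1}+\big(x_{r-1}\gamma_{d-1}+\phi_d+\psi_d\big)$ as a monoid with vertex $p_1$, irreducibility amounts to coprimality of $\eta_{d-1}(\overline x)$ and $x_{r-1}\gamma_{d-1}(\overline x)+(\phi_d+\psi_d)(\overline x)$; a common factor would have to divide $\eta_{d-1}$, $\gamma_{d-1}$ and $\phi_d+\psi_d$ at once. To rule this out I would use that for $d\gg0$ the monoids $W_1,W_2$ of the claim move in the positive–dimensional linear systems produced in Lemma~\ref{lem:mp16_1} (containing $Z_i$ and avoiding the cone are linear, resp.\ open, conditions), so they may be taken general; coprimality then holds for a general choice, making $X$ irreducible while preserving the three properties. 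Once the claim is established, Lemma~\ref{lem:mp16} follows by applying Lemma~\ref{lem:mp16_1} to each $Z_i\subset H_i\cong\P^{r-1}$ — a codimension–two variety whose projection from the relevant point $p'_i$ is birational — to produce $W_1$ and $W_2$.
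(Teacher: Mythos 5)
Your construction is essentially the paper's: your $F$ is a linear combination of the two cones $F_i=\pi_i^{-1}(W_i)$ over the monoids supplied by the claim, and your crossover verification of the double vertex, of $Z\subseteq X$, and of the two non-containments is exactly the intended argument. The one point where you diverge is the irreducibility issue you flag: the paper resolves it by taking a \emph{general} linear combination $\lambda F_1+\mu F_2$ rather than the fixed sum $F_1+F_2$ --- since $F_1$ and $F_2$ are irreducible (being cones over the irreducible monoids $W_1,W_2$) and distinct, the general member of the pencil is irreducible by Bertini's reducibility theorem, while all your other checks depend only on $\gamma_{d-1},\eta_{d-1}$ being nonzero and not vanishing identically on $\overline Z$, hence are insensitive to the choice of nonzero coefficients. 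Your alternative fix (take $W_1,W_2$ general in their linear systems while keeping the coefficients $1,1$) can also be made to work, but it requires the extra observation that the coefficient form $\eta_{d-1}$ moves in a linear system without fixed component (one can add to it any multiple of the equation of the irreducible hypersurface $\overline Z\subset\P^{r-2}$, on which $\eta_{d-1}$ does not vanish identically), so the general pencil member is the cleaner route.
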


\begin{proof} [Proof of the Claim] Let $F'_i\subset H_i$ be such a
  monoid, and let $F_i$ be the cone over $F'_i$ with vertex $p_{3-i}$,
  for $i=1,2$.  Then $F_i$ is a monoid with vertex $p_{i}$ (by
  construction, we can take any point in the line joining $p_i$ and
  $p_{3-i}$ as its vertex). 
In addition, $F_i$  contains
  $Z$ but does not contain $C_i(Z)$ (same argument as in the proof of
  the previous Claim). Then the assertion of Lemma~\ref{lem:mp16_1}
  holds for a general linear combination $F$ of $F_1$ and
  $F_2$.\end{proof}
 }

 Let $Z, p_1, p_2$ be as in Lemma \ref {lem:mp16}. Fix a general monoid $X\supset Z$ with vertices in $p_1$ and $p_2$; by Lemma \ref {lem:mp16} $X$ does not contain the cones $C_i(Z)$. Let $H_1$ and $H_2$
 be hyperplanes such that $p_i\not \in H_i$, for $i=1,2$. Let $Z_i$ be
 the projection of $Z$ from $p_i$ to $H_i$, for $i=1,2$. Then the map $\varphi:=\pi_{X,p_1,p_2}:H_1\rat H_2$ is birational and $Z_1$ is not contained in the indeterminacy locus of both $\varphi$ and $\varphi^{-1}$.
Thus $\varphi$ induces a birational transformation $\phi\colon
 Z_1\dasharrow Z_2$. For future reference we summarize this construction in the following Proposition.

\begin{proposition}\label{prop:proj} In the above setting, the
  double projection  $\varphi\colon H_1\dasharrow H_2$,
 (resp.\ $\varphi^ {-1}$) is defined at the general
  point of $Z_1$ (resp.\ of $Z_2$), hence it defines a birational map
  $\phi\colon Z_1\dasharrow Z_2$ (resp.\ $\phi^{-1}\colon
  Z_2\dasharrow Z_1$).
\end{proposition}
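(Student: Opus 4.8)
The plan is to show that the Cremona transformation $\varphi=\pi_{X,p_1,p_2}$ and its inverse are both defined at the general point of $Z_1$ and $Z_2$ respectively, so that the restriction $\phi\colon Z_1\dasharrow Z_2$ is a well-defined birational map. First I would recall that $\varphi=\pi_2\circ\pi_1^{-1}$, where $\pi_1^{-1}\colon H_1\dasharrow X$ is the inverse stereographic projection and $\pi_2\colon X\dasharrow H_2$ is the stereographic projection from $p_2$. Since $Z_1=\pi_1(Z)$ and $Z$ is birational to $Z_1$ (because the projection from $p_1$ is birational, as guaranteed by the hypotheses of Lemma~\ref{lem:mp16}), the map $\pi_1^{-1}$ sends the general point of $Z_1$ back to the general point of $Z\subset X$. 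The key geometric fact to exploit is that $X$ was chosen, via Lemma~\ref{lem:mp16}, so that $X\supset Z$ but $X\not\supset C_i(Z)$ for $i=1,2$.

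Next I would analyze where $\pi_2$ fails to be defined on $X$. The stereographic projection $\pi_2$ from $p_2$ contracts exactly the lines through $p_2$ lying on $X$; equivalently, its indeterminacy locus on $X$ is governed by the equations cutting out these lines, and a point of $X$ gets contracted precisely when the line joining it to $p_2$ lies on $X$. The crucial observation is that the locus of points of $Z$ whose join-line to $p_2$ lies on $X$ is contained in $Z\cap C_2(Z)$ up to the choice of $X$; more precisely, since $X$ does not contain the cone $C_2(Z)$, the divisor $X\cap C_2(Z)$ is a proper subvariety of $C_2(Z)$, and hence the locus of $z\in Z$ with $\overline{zp_2}\subset X$ is a proper closed subset of $Z$. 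Therefore $\pi_2$ is defined at the general point of $Z$, and composing with the birational identification $\pi_1^{-1}\colon Z_1\dasharrow Z$ shows $\varphi$ is defined at the general point of $Z_1$.

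The symmetric argument applied to $\varphi^{-1}=\pi_1\circ\pi_2^{-1}$, using that $X\not\supset C_1(Z)$, shows $\varphi^{-1}$ is defined at the general point of $Z_2$. Since $\varphi$ is a birational self-map of the ambient $\P^{r-1}$ (identifying $H_1\cong H_2\cong\P^{r-1}$) that restricts to dominant rational maps $Z_1\dasharrow Z_2$ and $Z_2\dasharrow Z_1$ which are mutually inverse on dense open subsets, the induced map $\phi\colon Z_1\dasharrow Z_2$ is birational with inverse $\phi^{-1}$.

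The main obstacle I anticipate is the careful bookkeeping in the second paragraph: translating the scheme-theoretic statement ``$X$ does not contain $C_i(Z)$'' into the genericity statement ``$\varphi$ is defined at the general point of $Z_1$.'' One must verify that the indeterminacy locus of $\pi_2$ on $X$, which consists of the points lying on lines through $p_2$ contained in $X$, meets $Z$ only in a proper closed subset. This is exactly where the non-containment $C_2(Z)\not\subset X$ is used: if every point of $Z$ lay on a line through $p_2$ inside $X$, then the whole cone $C_2(Z)$ would be swept out inside $X$, forcing $C_2(Z)\subseteq X$ and contradicting the conclusion of Lemma~\ref{lem:mp16}. Making this sweeping-out argument precise—and checking that $Z_1$ is genuinely not contained in the indeterminacy locus of $\varphi$, nor $Z_2$ in that of $\varphi^{-1}$—is the crux of the proof.
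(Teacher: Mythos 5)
Your overall strategy matches the paper's: the proposition is stated there as a summary of the preceding construction, and the only substantive input is that the general monoid $X$ contains $Z$ but neither cone $C_i(Z)$, so that for general $z\in Z$ the line joining $z$ to $p_i$ is not contained in $X$. You identify this correctly, and your argument is recoverable; but there is a concrete bookkeeping error at exactly the step you flag as the crux. The stereographic projection $\pi_2\colon X\dasharrow H_2$ is a projection from the single point $p_2$, so it is defined at every point of $X$ other than $p_2$; the assertion ``$\pi_2$ is defined at the general point of $Z$'' is automatic and does not use $C_2(Z)\not\subseteq X$. The locus you describe (points of $X$ lying on lines through $p_2$ contained in $X$) is the locus \emph{contracted} by $\pi_2$, equivalently the source of the indeterminacy of $\pi_2^{-1}$, not of $\pi_2$. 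What actually makes $\varphi=\pi_2\circ\pi_1^{-1}$ defined at the general point of $Z_1$ is that $Z_1\not\subseteq{\rm Ind}(\pi_1^{-1})$: for general $z\in Z$ the line $\overline{p_1z}$ is not contained in $X$, hence meets $X$ in $p_1$ with multiplicity $d-1$ plus the single residual point $z$, so $\pi_1^{-1}(\pi_1(z))=z$. This requires $C_1(Z)\not\subseteq X$, and it is genuinely needed: the birationality of $\pi_1|_Z$ alone, which is all you invoke in your first paragraph, would not prevent $Z_1$ from lying entirely inside ${\rm Ind}(\pi_1^{-1})$ if $C_1(Z)\subseteq X$. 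Symmetrically, $C_2(Z)\not\subseteq X$ is the hypothesis governing $\varphi^{-1}=\pi_1\circ\pi_2^{-1}$ on $Z_2$. In short, your two non-containment hypotheses are doing each other's jobs; since both hold and the situation is symmetric, the proof closes once you swap them and replace ``where $\pi_2$ fails to be defined'' by ``where $\pi_1^{-1}$ fails to be defined.''
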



\subsection {Cremona equivalence} \label{ssec:cremeq}
  Let $X,
Y\subset \P^ r$ be irreducible, projective varieties. We say that $X$
and $Y$ are \emph{Cremona equivalent} (CE) if there is a Cremona
transformation $\omega\colon \P^ r\dasharrow \P^ r$ such that $\omega$ [resp. $\omega^ {-1}$] is defined
at the general point of $X$ [resp. of $Y$] and such that $\omega$ maps $X$ to $Y$ [accordingly $\omega^ {-1}$ maps $Y$ to $X$]. This is an equivalence relation
among all irreducible subvarieties of $\P^r$. 

The following result is due to Mella and Polastri~\cite[Theorem
1]{MP}. We present an alternative proof, close to the original ideas,  but  more computational in spirit. 
\begin{remark}
  We take the opportunity of correcting a mistake in the proof
  of~\cite[Theorem 1]{MP}. In the notation of \cite[Theorem 1]{MP},
  let $T_i=\f_{\H_{i}}(X)$, $Y_i=T_i\cap (x_{n+1}=0)$ and $Z$ the cone
  over $T_i$ with vertex $q_1$. Let $S$ be the monoid containing $Y_i$
  and $W_i$ the projection of $T_i$ onto $S$. Then
  $X_{i+1}=\pi_{q_2}(W_i)$ and not the projection of a general
  hyperplane section of $Z$ as written in the published paper.
\end{remark}

\begin{theorem}\label{thm:mp} Let  $X, Y\subset \P^r$, with $r\geqslant 3$, be two irreducible varieties of positive dimension $n<r-1$.  Then
  $X,Y$ are CE if and only if they are birationally
  equivalent. \end{theorem}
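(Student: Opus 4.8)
One direction is immediate: if $X$ and $Y$ are CE, then the Cremona transformation $\omega$ restricts to a birational map between $X$ and $Y$, so they are birationally equivalent. The substance is the converse. Suppose $X, Y \subset \P^r$ are irreducible of dimension $n < r-1$ (so codimension at least two) and birationally equivalent. The plan is to show that $X$ and $Y$ are \emph{separately} Cremona equivalent to a common normal form, and then compose. The natural common target is a \emph{linear subspace} $L \cong \P^n \subset \P^r$: if I can show that any irreducible rational-or-not variety of codimension $\geqslant 2$ that is birational to a fixed $\P^n$ is CE to $L$, then two birational varieties $X$ and $Y$ are both CE to $L$ via the \emph{same} identification of their function field with $K(L)$, and transitivity of CE finishes the proof. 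So the real content is a linearization statement, reducing the theorem to building explicit linearizing Cremonas.

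\textbf{Building the linearizing Cremona by iterated double projection.}
To linearize $X$, I would use the double-projection machinery assembled in Section~\ref{sec:crem-equiv}. The idea is to pick two general points $p_1, p_2 \in \P^r$ and, invoking Lemma~\ref{lem:mp16}, produce (for $d \gg 0$) a monoid $X$ with vertices $p_1, p_2$ containing $Z$ but avoiding the cones $C_i(Z)$; Proposition~\ref{prop:proj} then yields a birational self-map $\varphi = \pi_{X,p_1,p_2}$ of $\P^{r-1}$ realizing a prescribed birational map $\phi\colon Z_1 \dasharrow Z_2$ between the two projections. The strategy is to use these double projections as \emph{moves} that progressively reduce the variety to a normal form: each step replaces $Z_1$ by a birationally equivalent $Z_2$ sitting inside a Cremona-equivalent copy of $\P^r$, where I get to \emph{choose} the glueing birational map. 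By iterating and choosing the maps so as to simplify the embedding at each stage, I drive the variety towards a linear subspace. The hypothesis $n < r-1$ is exactly what makes Lemma~\ref{lem:mp16} applicable (it needs $n \leqslant r-3$ after one projection, i.e. codimension $\geqslant 2$ to start), and this is where the codimension-two condition is essential.

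\textbf{Lifting to an ambient Cremona transformation.}
The maps produced by Proposition~\ref{prop:proj} are double projections on a \emph{hyperplane} $H_i \cong \P^{r-1}$, not honest Cremona transformations of $\P^r$ itself. To assemble them into a genuine Cremona equivalence in $\P^r$, I would use the \emph{monoidal extension} construction of Section~\ref{sec:mono-extens-rati}: Lemma~\ref{lem:tranf} guarantees that extending a birational self-map of $\P^{r-1}$ to a monoidal map of $\P^r$ preserves birationality, so each planar double projection lifts to a Cremona transformation of $\P^r$ that restricts appropriately on the subvariety. Composing the finitely many lifted steps gives a single Cremona transformation carrying $X$ onto a linear $\P^n$, defined at the general point of $X$ and with inverse defined at the general point of the image, as the definition of CE demands.

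\textbf{The main obstacle.}
The genuinely delicate point is the \emph{bookkeeping of the birational identification} across the iteration: at each double-projection step I must ensure that the chosen map $\phi\colon Z_1 \dasharrow Z_2$ is compatible with a \emph{fixed} isomorphism $K(X) \cong K(Y)$ coming from the assumed birational equivalence, so that when I linearize both $X$ and $Y$ I land on the \emph{same} parametrized $\P^n$ and can cancel. Equally, I must verify at every stage that the subvariety is not swallowed into the indeterminacy locus of $\varphi$ or $\varphi^{-1}$ — this is precisely the role of the ``not containing the cones $C_i(Z)$'' clause in Lemma~\ref{lem:mp16}, and controlling it across a composition (rather than a single step) is the crux. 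This is exactly the spot where the Remark flags an error in \cite{MP}, so the most careful part of the argument is tracking which variety ($X_{i+1} = \pi_{q_2}(W_i)$) is actually produced at each iteration and confirming it stays birational to the original with the correct embedding.
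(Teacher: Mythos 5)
Your easy direction is fine, but the converse has a fundamental gap: your entire strategy is to send both $X$ and $Y$ to a common \emph{linear} normal form $L\cong\P^n$, and a variety can be Cremona equivalent to a linear subspace only if it is rational. The theorem makes no rationality assumption --- $X$ and $Y$ are arbitrary birationally equivalent varieties of codimension at least two --- so for non-rational $X$ there is no linear $L$ to aim for, and your phrase ``any rational-or-not variety that is birational to a fixed $\P^n$'' is vacuous outside the rational case. In the paper the logical order is the reverse: Theorem~\ref{thm:mp} is proved first for arbitrary birational pairs, and Cremona \emph{linearizability} of rational varieties (Section~\ref{ssec:lincrem}) is deduced from it afterwards by taking $Y$ to be a linear subspace. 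The mechanism your sketch is missing is a coordinate-by-coordinate interpolation between the two parametrizations: starting from birational maps $\phi,\psi\colon Z\dasharrow X,Y$ given in affine coordinates by rational functions $\phi_j,\psi_j$, one builds varieties $X_i$ parametrized by $(\tilde\phi_{i,1},\dots,\tilde\phi_{i,r-i},\psi_{r-i+1},\dots,\psi_r)$, replacing one coordinate function at each of $r$ steps. Both $X_i$ and $X_{i+1}$ are projections of the graph-like variety $Z_i\subset\P^{r+1}$ (the closure of the image of $g_i$, which records both the old and the new coordinate) from two points at infinity, and Lemma~\ref{lem:mp16} together with Proposition~\ref{prop:proj} produces a monoid in $\P^{r+1}$ with those two vertices whose double projection is a Cremona transformation of $\P^r$ carrying $X_i$ to $X_{i+1}$.

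This also corrects a dimensional misreading in your third paragraph: the double projections are not self-maps of a hyperplane $\P^{r-1}$ that need to be lifted by monoidal extensions. The construction goes \emph{up} to $\P^{r+1}$ via the graph $g_i$ and then projects onto two hyperplanes isomorphic to $\P^r$, so Proposition~\ref{prop:proj} already delivers a genuine Cremona transformation of $\P^r$; Lemma~\ref{lem:tranf} on monoidal extensions plays no role in this proof. Finally, ``choosing the maps so as to simplify the embedding at each stage'' is not an argument: without the explicit interpolation scheme there is no measure of progress and no reason the process should terminate, whereas the paper's scheme terminates after exactly $r$ steps by construction. The genericity issues you correctly flag (keeping each $X_i$ out of the indeterminacy loci, ensuring the intermediate projections remain birational) are handled in the paper by a preliminary general change of coordinates and by an auxiliary claim relying on \cite{CC}.
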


\begin{proof} We prove the nontrivial implication. 

 {
\begin{claim} We may assume that the
  projection of $Y$ from any coordinate subspace of dimension $m$ is
  birational to its image if $r> n+m+1$ and dominant to $\P^{r-m-1}$
  if $r\le n+m+1$.

\end{claim}

\begin{proof}[Proof of the Claim] If we choose the $r+1$ torus-fixed
  points of $\P^ r$ to be generic (which we can do after a generic
  change of coordinates), then each one of the coordinate
  subspaces of a given dimension $m$ (spanned by $m+1$ coordinate
  points) is generic in the corresponding Grassmannian, hence the
  assertion follows.\end{proof} }

  Let $Z$ be a smooth variety and
  let $\phi\colon Z\dasharrow X$ and $\psi\colon Z\dasharrow Y$ be birational
  maps. Passing to affine coordinates, we may assume that $\phi$ and $\psi$
  are given by equations
\[
x_j=\phi_j(t), \text{ and }\; y_j=\psi_j(t),\,\,\, \text {for} \,\,\, 1\leqslant j\leqslant  r,
\]
where $\phi_j, \psi_j$ are rational functions on
$Z$ and $t$ varies in a suitable dense open subset of $Z$. 

We prove the theorem by constructing a sequence of birational maps $\varphi_i: Z\dasharrow X_i\subset \P^ r$, 
with $X_i$ projective varieties, for $0\leqslant i\leqslant  r$, such that:\par
\begin{inparaenum} 
\item [(a)] $\varphi_0=\phi$ and $\varphi_r=\psi$, thus $X_0=X$ and $X_r=Y$;\par
\item [(b)] for $0\leqslant i\leqslant r-1$, there is a Cremona
  transformation $\omega_i\colon\P^ r\dasharrow \P^ r$, such that
  $\omega_i$ (resp.\ $\omega_i^ {-1}$) is defined at the general point
  of $X_i$ (resp.\ of $X_{i+1}$), and it satisfies $\omega_i(X_i)=X_{i+1}$
  (accordingly, $\omega_i^ {-1}(X_{i+1})=X_i$) and
  $\varphi_{i+1}=\omega_i\circ \varphi_i$.
\end{inparaenum}

The construction is done recursively. We assume $\varphi_i$ is of the form
\[
\varphi_i(t)= (\tilde \phi_{i,1}(t),\ldots, \tilde \phi_{i,r-i}(t),\psi_{r-i+1}(t), \ldots, \psi_r(t)), \,\,\, \text{for}\,\,\, t\in Z \,\,\, \text{and}\,\,\, 0\leqslant i\leqslant  r-1,
\]
where the $\tilde \phi_{i,j}$'s are suitable rational functions on
$Z$. For $i=0$, the starting case, we fix $\tilde \phi_{0,i}= \phi_i$
for all $0\leqslant i\leqslant r$.  Thus, requirement (a) is
satisfied.

Assume $0\leqslant i\leqslant r-1$. In order to perform the step from
$i$ to $i+1$, we consider the map
\[
g_i\colon Z\dasharrow \mathbb A^ {r+1}, 
\qquad g_i(t)= (\tilde \phi_{i,1}(t),\ldots, \tilde
\phi_{i,r-i}(t),\psi_{r-i}(t),\psi_{r-i+1}(t),\ldots, \psi_r(t)).
\]
Let $Z_i$ be the closure of the image of $g_i$, and
$\pi_{j}\colon \mathbb{A}^{r+1}\to \mathbb{A}^{r}$ the projection to
the coordinate hyperplane $\{x_{j}=0\}$ from the point at infinity of
the axis $x_{j}$, for all $1\leqslant j\leqslant r+1$. 
We have $\varphi_i=\pi_{r-i+1}\circ g_i$, and since $\varphi_i$ is
birational onto its image, the same holds for $g_i$.
{
\begin{claim}\label{sub:claim1} The projection of $Z_i$ from a general
  point of the space at infinity of the affine linear space $\Pi_i:=  \{x_{r-i+1}=\ldots =x_{r+1}=0\}$ is birational to its
  image. \end{claim}

\begin{proof} [Proof of the Claim] The variety $Z_i$ is not a
  hypersurface, then by \cite [Theorem 1]{CC}, the locus of points
  from which the projection is not birational has dimension strictly
  bounded by the dimension of $Z_i$.  We may therefore assume that
  $\dim(\Pi_i)=r-i-1< n$. On the other hand the map $\psi$ is
  birational, therefore we may as well assume that $i<r-1$ .  So it
  remains to prove the result in the range $0<r-i-1<n$. 

 The projection of $Z_i$ from $\Pi_i$ is the closure of the image of the
  map
\[
h_i\colon  Z\dasharrow  \mathbb A^ {i+1}, \qquad  h_i(t)= (\psi_{r-i}(t),\psi_{r-i+1}(t),\ldots, \psi_r(t)).
\]
By the previous claim applied to $Z$, 
either $h_i$ is birational to its image (if 
$i\geqslant n$) 
or $h_i$ is dominant. In the former case the projection from a general
point of $\Pi_i$ is also birational, so the assertion follows. In the
latter case the cone over $Z_i$ with vertex $\Pi_i$ is the whole
$\P^r$, and the assertion follows from \cite [Theorem
1]{CC}.\end{proof}}

By the Claim, we can make a general change of the first $r-i$
coordinates of $g_i$ so that $\varphi_{i+1}:= \pi_{r-i+i}\circ g_i$ 
is
birational to its image $X_{i+1}$. Finally, iterated applications of
Proposition \ref{prop:proj} show that also requirement (b) is
satisfied, thus ending the proof.\end{proof}

\subsection{Linearizing Cremona}\label{ssec:lincrem}
Theorem \ref {thm:mp} ensures that any variety $X$ of dimension $n\leq
r-2$ in $\P^ r$ is CE to a hypersurface in a $\P^
{n+1}{\subset\P^r}$. If, in addition, $X$ is rational, then it is CL,
e.g.\ it is CE to the subspace $\{x_{n+1}=\ldots =x_r= 0\}$.  In
particular, suppose that there is a linear subspace $\Pi$ of dimension
$r-n-1$ of $\P^r$ such that the projection from $\Pi$ induces a
birational map $\pi\colon X\dasharrow \P^ n$.  Equivalently, $X$
admits an affine parametrization of the form
\begin{equation}\label{eq:parameter}
  x_i=t_i,\quad {\rm for}\quad 1\leqslant i\leqslant  n,\qquad
  x_j=f_j(t_1,\ldots,t_n), \quad {\rm for}\quad n+1\leqslant j\leqslant  r,\end{equation}
where the $f_i$'s are rational functions of $t_1,\ldots,t_n$.  For instance, 
smooth toric varieties and Grassmannians enjoy this property (see Section~\ref {sec:examples}). 

Using~\eqref{eq:parameter}, we define a  Cremona
map $\phi\colon \mathbb P^ r\dasharrow \mathbb P^ r$ in affine
coordinates as
\begin{equation}
  \phi(x_1,\ldots,x_r) = (x_1,\ldots, x_n, x_{n+1}-f_{n+1}(x_1,\ldots, x_n),\ldots, x_{r}-f_{r}(x_1,\ldots, x_n)).\label{eq:basiccremona}
\end{equation}
The map $\phi$ gives a birational equivalence between $X$ and the
subspace $\{x_{n+1}=\ldots =x_r= 0\}$, hence $\phi$ linearizes $X$.
The above construction can be slightly modified to make it more general.  Fix a collection of
rational functions $g_i(x_1,\ldots,x_{i-1})$ and $h_i(x_1,\ldots,x_{i-1} )$, with $n+1\leqslant i\leqslant
r$, with all the $h_i(x_1,\ldots,x_{i-1} )$'s are nonzero. 
We replace the $i$--th coordinate of $ \phi$
with the expression
\[ \phi_i(x_1,\ldots,x_r)= h_i(x_1,\ldots,x_{i-1} )\big (
x_i-f_i(x_1,\ldots,x_n)\big ) +g_i(x_1,\ldots,x_{i-1}), \qquad i=n+1,
\ldots, r .\]
 The following is clear: 
\begin{lemma}
  The image $\phi(X)$ is the linear subspace $\{x_{n+1}=\ldots =x_r=
  0\}$ if and only if the functions $g_i$ vanish on $X$ for 
  $n+1 \leqslant i\leqslant r$.
\end{lemma}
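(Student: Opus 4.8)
The plan is to evaluate the modified map $\phi$ directly along the given parametrization~\eqref{eq:parameter} of $X$ and simply read off the image coordinates. A general point of $X$ has the form $P_t=(t_1,\ldots,t_n,f_{n+1}(t),\ldots,f_r(t))$ with $t=(t_1,\ldots,t_n)$ ranging over a dense open subset of $\Af^n$. The crucial observation is that along $X$ the expression $x_i-f_i(x_1,\ldots,x_n)$ vanishes identically: restricting to $P_t$ we have $x_j=t_j$ for $1\leqslant j\leqslant n$, so $f_i(x_1,\ldots,x_n)|_X=f_i(t)=x_i|_X$ for every $n+1\leqslant i\leqslant r$. Note also that $\phi$ is a triangular Cremona transformation in the sense of \S\ref{sec:trianguar}, since each $\phi_i$ is linear in $x_i$ with leading coefficient $h_i(x_1,\ldots,x_{i-1})\neq 0$ (for $n+1\leqslant i\leqslant r$, the term $f_i(x_1,\ldots,x_n)$ involves only variables among $x_1,\ldots,x_{i-1}$); in particular $\phi$ is birational.

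Granting the vanishing of $x_i-f_i$ on $X$, I would compute $\phi_i(P_t)$ for each $i$. For $1\leqslant j\leqslant n$ the coordinate is unchanged, $\phi_j(P_t)=t_j$. For $n+1\leqslant i\leqslant r$, the factor $x_i-f_i(x_1,\ldots,x_n)$ annihilates the first summand of $\phi_i$, so $\phi_i(P_t)=g_i(P_t)$; that is, $\phi_i|_X$ equals the restriction of $g_i$ to $X$. Hence the image of the general point is $(t_1,\ldots,t_n,g_{n+1}|_X,\ldots,g_r|_X)$.

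From here both implications are immediate. For the ``if'' direction, suppose each $g_i$ vanishes on $X$; then $\phi(P_t)=(t_1,\ldots,t_n,0,\ldots,0)$, so $\phi(X)\subseteq L:=\{x_{n+1}=\ldots=x_r=0\}$. As $t$ ranges over a dense open subset of $\Af^n$, these image points are dense in $L\cong\Af^n$, and passing to Zariski closure gives $\phi(X)=L$. (Equivalently, since $\phi$ is birational, $\phi(X)$ is irreducible of dimension $n=\dim L$, so the containment forces equality.) For the ``only if'' direction, if $\phi(X)=L$ then in particular $\phi(X)\subseteq L$, so the last $r-n$ coordinates of $\phi(P_t)$ vanish; by the computation these coordinates are $g_i|_X$, whence each $g_i$ vanishes on $X$.

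The entire content of the lemma sits in the first restriction computation, and there is no substantial obstacle. The only points requiring a little care are: (i) checking that $h_i$, $g_i$, and $f_i$ are regular and that $h_i$ is nonzero at a general point of $X$, so that the identity $\phi_i|_X=g_i|_X$ holds at the generic point rather than being a purely formal manipulation with possible poles; and (ii) upgrading the inclusion $\phi(X)\subseteq L$ to an equality, which follows either from density of the image of the parametrization or from the birationality of the triangular map $\phi$ together with $\dim X=\dim L=n$.
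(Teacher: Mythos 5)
Your proof is correct and is exactly the argument the paper has in mind: the paper states this lemma with the remark ``The following is clear'' and omits any written proof, and your direct evaluation of $\phi$ along the parametrization (using that $x_i-f_i(x_1,\ldots,x_n)$ vanishes on $X$, so $\phi_i|_X=g_i|_X$) is the intended justification. The two points of care you flag (genericity of the rational functions on $X$, and upgrading the inclusion to equality via density or dimension) are handled appropriately.
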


\section{Secant and tangential varieties}
\label{sec:crem-repr}

In this section, we focus on Cremona transformations of secant and
tangential varieties.  Similar techniques can be applied to
{osculating varieties}, although we will not do this here.

\begin{definition}
  Let $X\subset \P^r$ be a variety of dimension $n$. The 
\emph{$k$--secant variety} $\Sec_k(X)$ of $X$ (simply $\Sec(X)$ if $k=1$)  is the Zariski
closure of the union of all $(k+1)$--secant linear spaces of dimension
$k$ to $X$, i.e. those containing $k+1$ linearly independent  points of $X$. The
\emph{$k$--defect} of $X$ is $\min\{r, n(k+1)+k\}-\dim(\Sec_k(X))$ (which is nonnegative), and $X$ 
is \emph{$k$--defective} if  the {$k$--defect} is positive.
\end{definition}
The $k$--secant variety of $X$ has expected dimension $nk+n+k$. It is parametrized as
\begin{equation}
  \label{eq:30}
  \psi\colon {\rm Sym}^ {k+1}(X)\times \P^k \dashrightarrow \Sec_k(X)\subset \P^r,  \quad ([p^{(0)},
  \ldots, p^{(k)}], [s_0,\ldots,
  s_k])\mapsto \sum_{j=0}^k s_j p^{(j)}.
\end{equation}

Assume that there is a codimension $n+1$ linear subspace $\Pi$ such
that the projection from $\Pi$ induces a birational map $\pi\colon
X\dashrightarrow \P^n$. From Section \ref {sec:crem-equiv}, we know
that $X$ can be parametrized as in~\eqref {eq:parameter}.  Then, we can
combine the maps $\psi$ and $\pi$ to simplify the parametrization of
$\Sec_k(X)$, as we now show.

Pick affine variables $s_1,\ldots, s_k$ and set $s_0:=1-\sum_{j=1}^k
s_j$ in~\eqref{eq:30}. Consider $k+1$ vectors of unknowns
\[{\bf t}_i=(t_{i1},\ldots, t_{in}) \quad \text{ for}\quad 0\leqslant
i\leqslant k.\] 
Then, $\Sec_k(X)$ is parametrized as follows
\begin{equation*}
 \label{eq:param2} 
  x_i=\begin{cases}
    s_0t_{0i}+s_1t_{1i}+\ldots +s_kt_{ki}& \quad \text{ for}\quad 1\leqslant i\leqslant n,\\
   s_0f_i({\bf t}_0)  +s_1f_i({\bf t}_1)+\ldots+s_kf_i({\bf t}_k)& \quad
   \text{ for}\quad n+1\leqslant i \leqslant r. 
  \end{cases}
 \end{equation*}
 We let $\phi$ be the Cremona transformation from
 \eqref{eq:basiccremona}, that linearizes $X$.  Applying $\phi$ to
 $\Sec_k(X)$ gives
\begin{equation}
\label{eq:param1} 
 x_i=\begin{cases}
s_0t_{0i}+s_1t_{1i}+\ldots +s_kt_{ki}& \; \text{ for }\;
 1\leqslant i\leqslant n,\\
s_0f_i({\bf t}_0)  +s_1f_i({\bf t}_1)+\ldots+s_kf_i({\bf t}_k)-f_i(s_0{\bf
  t}_0+s_1{\bf t}_1+\ldots+s_k{\bf t}_k)& \; \text{ for } \;
n+1\leqslant i \leqslant r.
\end{cases}
\end{equation}

This change of coordinates can be useful for computing geometric
invariants of $X$, such as its $k$--defect. The next example,
illustrates this situation.
\begin{example}\label{ex:sec}
Suppose that the  $f_i$ in ~\eqref {eq:parameter} are quadratic
polynomials. In this case, $X$  is a projection of the Veronese variety, hence it is 1--defective. We write the  homogeneous decomposition of $f_i$
\[ f_i=f_{i0}+f_{i1}+f_{i2}, \,\,\, \qquad\text{for}\,\,\, n+1\leqslant i\leqslant  r,\] where $f_{ij}$
is the homogeneous component  of $f_i$ of degree $j$. Let $\Phi_i$ be
the bilinear form associated to $f_{i2}$. Then, the parametrization
\eqref{eq:param1} yields
the expression
\begin{equation*}
\label{eq:param3} 
x_i =\begin{cases}
s_0t_{0i}+s_1t_{1i}+\ldots +s_kt_{ki},& \quad \text{ for}\quad 1\leqslant i\leqslant n,\\
\sum\limits_{j=0}^ k s_j(1-s_j) f_{i2} ({\bf t}_j)-
2\sum\limits_{0\leqslant u<v\leqslant k }s_us_v\Phi_i({\bf t}_u, {\bf t}_v),& \quad
\text{ for}\quad n+1\leqslant i \leqslant r.
\end{cases}
\end{equation*}

Suppose that $k=1$. Then
$$
x_i=s_0(1-s_0)f_{i2} ({\bf t}_0)+s_1(1-s_1)f_{i2} ({\bf t}_1)-2s_0s_1\Phi_i({\bf t}_0, {\bf t}_1),\,\,\, \text{for}\,\,\, n+1\leqslant  i\leqslant r.
$$
Since $s_0=1-s_1$, then  $s_0(1-s_0)=s_1(1-s_1)=s_0s_1$ and  we obtain
\[
x_i= s_0 s_1 f_{i2} ({\bf t}_1-{\bf t}_0)\qquad\mbox{for}\quad n+1\leqslant i \leqslant r.
\]
Replacing ${\bf t}_0-{\bf t}_1$ with ${\bf u}:=(u_{1},\ldots,u_{n})
$ 
 and setting ${\bf t}_0=:{\bf t}=(t_1,\ldots, t_n)$ and $s_1=:s$ yields
\begin{equation*}
\label{eq:param2b} 
x_i =\begin{cases}
t_{i}+su_{i},& \quad \text{ for}\quad 1\leqslant i\leqslant n,\\
s(1-s)f_{i2} ({\bf u}),& \quad
\text{ for}\quad n+1\leqslant i \leqslant r.
\end{cases}
\end{equation*}
The image of a general secant line is a conic with two points in the linear image of the variety $X$.
The dimension of the secant variety can be deduced from  the rank of the Jacobian of this parametrization. \end{example}

Next we discuss the interplay between tangential varieties and Cremona
transformations.
\begin{definition}
    Let $X\subset \P^r$ be a variety. The 
\emph{tangential variety} $T(X)$ of $X$ is  the Zariski closure of the
union of all tangent spaces to $X$ at smooth points of $X$. 
\end{definition}
Assume that $X$ has dimension $n$. The tangential variety has expected
dimension $2n$. If $X$ is (locally) parametrized by a map
\[ {\bf t}=(t_1,\ldots, t_n)\in U\mapsto [x_0({\bf t}), \ldots,
x_r({\bf t})]\in X,\]
where $U\subset \C^ n$ is a suitable nonempty open subset, then $T(X)$ is represented by
\begin{equation*}
  \label{eq:4}
  \tau\colon U\times \C^{n}  \dashrightarrow T(X), \quad
({\bf t},{\bf s})=(t_1,\ldots, t_n, s_1,\ldots, s_n) \mapsto
  [x_0({\bf t})+\sum\limits_{j=1}^n s_j \frac {\partial x_0}{\partial
  t_j}({\bf t}), \ldots, x_r({\bf t})+\sum\limits_{j=1}^n s_j \frac {\partial x_r}{\partial
  t_j}({\bf t})].
\end{equation*}
Assume again that $X$ is described as in \eqref
{eq:parameter}. Then, the parametric equations of $T(X)$ have a
simplified expression
\begin{equation*}
\label{eq:param3a} 
x_i =\begin{cases}
t_{i}+s_i,& \quad \text{ for } \quad 1\leqslant i\leqslant  n,\\
f_i({\bf t})+\sum\limits_{j=1}^ ns_j\frac {\partial f_i}{\partial
  t_j} ({\bf t}),& \quad \text{ for }\quad n+1\leqslant i\leqslant   r.
\end{cases}
\end{equation*}
Under the linearizing Cremona transformation $\phi$ from Section~\ref{sec:crem-equiv}, the variety $T(X)$
has image
\begin{equation}
\label{eq:param4} 
x_i =\begin{cases}
t_{i}+s_i,& \;\text{ for }\; 1\leqslant i\leqslant  n,\\
f_i({\bf t})-f_i({\bf t}+{\bf s})
+\sum\limits_{j=1}^ ns_j\frac {\partial f_i}{\partial t_j} ({\bf t}),& \; \text{ for }\; n+1\leqslant i\leqslant   r.
\end{cases}
\end{equation}

\begin{example}\label{ex:tan}
Assume the $f_i$'s in~\eqref
{eq:parameter} are homogeneous quadratic polynomials. Then \eqref{eq:param4} becomes
\begin{equation}
\label{eq:param5} 
x_i =\begin{cases}
t_{i}+s_i,& \quad \text{ for}\quad 1\leqslant i\leqslant  n,\\
-f_i({\bf s}),& \quad \text{ for}\quad n+1\leqslant i\leqslant   r.
\end{cases}
\end{equation}
Formula~\eqref{eq:param5} describes a cone with vertex the space at
infinity of the $n$--dimensional linear space $\{x_{n+1}=\ldots =
x_r=0\}$, over the variety parametrically represented by the last
$n-r$ coordinates of~\eqref{eq:param5}
\begin{equation*}
\label{eq:param6} 
x_i=-f_i({\bf s}),  \qquad  \text{ for}
\quad n+1\leqslant i\leqslant   r.
\end{equation*}
\vspace{-5ex}
\end{example}

In Section \ref {sec:examples} we will see how the equations of secant
and tangential varieties simplify in classical defective cases, as
predicted by the above example.  If the parametrization involves forms
of degree higher than $2$, the tangent variety is in general no longer
transformed to a cone.  In Section \ref{sec:cum} we will see
alternative linearizing Cremonas that work better for certain
varieties. For instance, for Segre varieties cumulant Cremonas enable
us to write the tangential variety in the form (\ref{eq:param5}) even
though the parametrizing polynomials are not quadratic.

\section{ Cremona linearization of some classical varieties}
\label{sec:examples}

Segre, Veronese and Grassmannian varieties and their secants
play a key role in the study of determinantal
varieties.  
 Here we describe some triangular Cremona transformations that
 linearize these varieties, and we will compute the image of their
 secant varieties under these transformation. Similar considerations
 can be applied to \emph{Spinor varieties} (see \cite  {an} for a
 parametrization of these varieties), and to \emph{Lagrangian Grassmannians} $LG(n,2n)$, etc., on which we do not dwell here.

\subsection{Segre varieties} \label{sec:segre-varieties} The
\emph{Segre variety} $\Segre (r_1,\ldots, r_k)$ is  the image of 
$\P^ {r_1}\times \ldots\times \P^ {r_k}$ under the Segre embedding in $\P^ r$, with $r+1=\prod_{i=1}^ k (r_i+1)$ (we may assume $r_1\geqslant r_2\geqslant \ldots\geqslant r_k\geqslant 1$).  Sometimes we may use the exponential notation $\Segre (m^ {h_1}_1,\ldots, m^ {h_k}_k)$ if  $m_i$ is repeated $h_i$ times, for $1\leqslant i\leqslant k$.

In this section, we find Cremona linearizations for $\Segre(m,n)$ and
we show how they simplify the equations for their secant varieties. In
Section~\ref{sec:cum} we will extend this to higher Segre varieties.

We interpret $ \P^{mn+m+n}$ as the space of nonzero $(m+1)\times
(n+1)$ matrices modulo multiplication by a nonzero scalar, so we have
coordinates $[x_{ij}]_{0\leqslant i\leqslant n, 0\leqslant j\leqslant
  m}$ in $ \P^{mn+m+n}$.  Then, $\Segre(m,n)$ is defined by the rank condition
\[\rk(x_{ij})_{0\leqslant i\leqslant n,  0\leqslant j\leqslant m}=1.\]
This condition amounts to equate to zero all $2\times 2$ minors of the
matrix ${\bf x}=(x_{ij})_{0\leqslant i\leqslant n, 0\leqslant
  j\leqslant m}$. We pass to affine coordinates by setting $x_{00}=1$,
and we let
\[
x=\begin{pmatrix} 1&x_{01}&x_{02}&\cdots & x_{0n}\\
x_{10}&x_{11}&x_{12}&\cdots & x_{1n}\\
\vdots&&& & \vdots\\
x_{m0}&x_{m1}&x_{m2}&\cdots & x_{mn}
  \end{pmatrix}
\]
be the corresponding matrix. Then the affine equations of
$\Segre(m,n)$ are $\{ x_{ij}-x_{i0}x_{0j}=0\}_{1\leqslant i\leqslant
  n, 1\leqslant j\leqslant m}$. This shows that $\Segre(m,n)$ has
parametric equations of type \eqref {eq:parameter} with parameters
$x_{i0}, x_{0j}$, for $1\leqslant i\leqslant n, 1\leqslant j\leqslant
m$.

As in Section~\ref {ssec:lincrem} a linearizing
affine Cremona has equations (in vector form)
    \begin{equation}
  ( y_{ij})_{0\leq i\leq m, 0\leq j\leq n, (i,j)\neq (0,0)}=
     (x_{i0}, x_{0j}, x_{ij}-x_{i0}x_{0j})_{1\leqslant i\leqslant n,  1\leqslant j\leqslant m},
\label{eq:SegreNN}
   \end{equation}
   which is of type $(2,2)$ and in homogeneous coordinates reads
     \[
    [{\bf y}]=[y_{ij}]_{0\leq i\leq m, 0\leq j\leq n)}=
     [x_{00}^ 2, x_{00}x_{i0},x_{00}x_{0j}, x_{00}x_{ij}-x_{i0}x_{0j}]_{1\leqslant i\leqslant n,  1\leqslant j\leqslant m}.
     \]
The indeterminacy locus has equations $\{x_{00}=x_{i0}x_{0j}=0\}_{1\leqslant i\leqslant n,  1\leqslant j\leqslant m}$ and the reduced fundamental locus $\{x_{00}=0\}$.  

To see the image of  the secant varieties, we 
perform column operations on $x$ and use ~\eqref{eq:SegreNN} to see that 
   \[
\operatorname{rank}(x)=\operatorname{rank}\,\begin{pmatrix} 1&0&0&\cdots & 0\\
y_{10}&y_{11}&y_{12}&\cdots & y_{1n}\\
\vdots&&& & \vdots\\
y_{m0}&y_{m1}&y_{m2}&\cdots & y_{mn}
  \end{pmatrix}
   =1+\operatorname{rank} \begin{pmatrix} 
y_{11}&y_{12}&\cdots & y_{1n}\\
\vdots&& & \vdots\\
y_{m1}&y_{m2}&\cdots & y_{mn}
  \end{pmatrix}. 
   \]  
 Therefore the $k$--secant  variety to $\Segre(m,n)$ is mapped to the cone over the $(k-1)$--secant variety of $\Segre(m-1, n-1)$ with vertex along the linear image of 
   $\Segre(m,n)$.

\begin{example}\label{ex:seg}  The (first) secant and tangent variety to $\Segre(2,2)$ is the cubic 
    hypersurface defined by the $3\times 3$-determinant 
    \[
   \det ({\bf x})= x_{00}(x_{11}x_{22}-x_{12}x_{21})-x_{01}(x_{10}x_{22}-x_{20}x_{12})+x_{02}(x_{10}x_{21}-x_{20}x_{11})=0.
    \]
    In the new coordinates this hypersurface has the simpler binomial equation 
$y_{11}y_{22}- y_{12}y_{21}=0$.\end{example}

   \subsection{Projectivized tangent bundles}  The projectivized tangent bundle $TP^n$ over $\P^n$ is 
      embedded in $\Segre(n,n)$ as the traceless nonzero  $(n+1)\times(n+1)$--matrices modulo
      multiplication by nonzero scalar, i.e. as the hyperplane section ${\rm tr} ({\bf x})=0$
      of  $\Segre(n,n)$ in $ \P^{n^2+2n}$. On the affine chart $x_{00}\neq 0$, 
      we view $TP^n$ as the set of rank $1$ matrices of the form
\[
x=\begin{pmatrix} 1&x_{01}&x_{02}&\cdots & x_{0n}\\
x_{10}&x_{11}&x_{12}&\cdots & x_{1n}\\
\vdots&&& & \vdots\\
x_{n0}&x_{n1}&x_{n2}&\cdots & -x_{11}-\ldots -x_{n-1,n-1}-1
  \end{pmatrix}.
\] We can
      parametrize $TP^n$ with the $2n-1$ coordinates $x_{0i}\neq 0$, with $1\leqslant i\leqslant n$,  and $x_{ii}$, with 
      $1\leqslant i\leqslant n-1$. The parametric equations for the remaining coordinates are
      \[\left\{
\begin{array}{lr}
x_{i0}=\frac {x_{ii}}{x_{0i}} & \text{ for  } 1\leqslant i \leqslant n-1,\\
x_{n0}=-\frac {1+x_{11}+\ldots +x_{n-1,n-1}}{x_{0n}},\\
x_{ij}=\frac {x_{ii} x_{0j}} {x_{0i}}=x_{i0}x_{0j}&\text{ for  }1\leqslant i<j \leqslant n. \\
\end{array}\right.
\]

     According to Section \ref {ssec:lincrem} we have a linearizing Cremona map
 $\phi \colon\P^{n^2+2n-1}\dasharrow \P^{n^2+2n-1}$
   given in affine coordinates by
    \begin{equation}\label{eq:mat}
\begin{array}{lr}
y_{0i}=x_{0i} & \text{ if }  \,\,\,0\leqslant i \leqslant n \\
y_{ii}=x_{ii} &  \text{ if }\,\,\,1\leqslant i \leqslant n-1\\
y_{i0}=x_{ii}- x_{i0}x_{0i}  & \text{ for  } 1\leqslant i \leqslant n-1\\
y_{n0}=-({1+x_{11}+\ldots +x_{n-1,n-1}}+x_{n0}x_{0n})\\
y_{ij}=x_{ij}-x_{i0}{x_{0j}}&\text{ for  }1\leqslant i<j \leqslant n. \\
\end{array}
\end{equation}

   Performing row operations on $x$ and using \eqref {eq:mat}, we see that $x$ has rank $k$ if and only if  
    \[
y'=\begin{pmatrix} y_{10}&y_{12}&y_{13}&\cdots & y_{1n}\\
y_{21}& y_{20}&y_{23}&\cdots &y_{2n}\\
\vdots&&& & \vdots\\
y_{n1}&y_{n2}&y_{n3}&\cdots &  y_{n0}
  \end{pmatrix}
\]
has rank $k-1$. This shows that 
the $k$--th secant variety of  $TP^n$ is mapped to a cone over the  $(k-1)$-st  secant variety of   $\Segre(n-1,n-1)$ with vertex along the linear image of $TP^n$.

\begin{example} \label{ex:tp} The first secant variety of $TP^2$
  coincides with the  tangent variety and it is the cubic 
    hypersurface defined in $\P ^7$, with coordinates $[x_{ij}]_{0\leqslant i\leqslant j\leqslant 2, (i,j)\neq (2,2)}$, by the equation
        \[
  \det (x)= x_{00}^2 x_{11}+x_{00}(x_{11}^2+x_{12}x_{21}-x_{01}x_{10})-x_{01}(x_{10}x_{11}+x_{20}x_{12})-x_{02}(x_{10}x_{21}-x_{20}x_{11})=0.
    \]
    In the new coordinates it has the simpler equation 
    $y_{12}y_{21}=y_{10}y_{20}$, which defines the cone over $\Segre(1,1)$ with vertex along the subspace $\{y_{12}=y_{21}=y_{10}=y_{20}=0\}$,   the linear image of $TP^2$.\end{example}

\subsection{Veronese varieties}
\label{sec:veronese-varieties}
Consider the \emph{$2$--Veronese variety} $V_{2,n}$ of quadrics in $\P^ n$
embedded in $\P^ {\frac {n(n+3)}2}$ with coordinates $[x_{ij}]_ {0\leqslant
i\leqslant j\leqslant n}$.  The following map $\phi$ is a linearizing affine $(2,2)$ Cremona
transformation  for $V_{2,n}$ defined on $\{x_{00}\not=0\}$
     \[
    (x_{ij})_{0\leq i\le j\leq n}\mapsto (y_{ij})_{0\leq i\le j\leq n}=
     (x_{01},\dots, x_{0n}, x_{ij}-x_{0i}x_{0j})_{1\leq i\le j\leq n}.
     \]
 Its reduced fundamental locus is $\{x_{00}=0\}$ and the indeterminacy locus is $\{x_{00}=\ldots=x_{0n}=0\}$.
     
We interpret $V_{2,n}$ as the set of rank $1$ symmetric matrices
$x=(x_{ij})_ {0\leqslant
i, j\leqslant n}$  with  $x_{ji}=x_{ij}$ if $j<i$.
The $(k-1)$--secant variety to $V_{2,n}$ is
defined by the $k\times k$--minors of $x$.  On $\{x_{00}\not=0\}$ the rank of $x$
coincides with the rank of
 \[  
      \begin{pmatrix} 1&0&0&\ldots &0\\
 x_{01}&x_{11}-x_{01}^2&x_{12}-x_{01}x_{02}&\ldots &x_{1n}-x_{01}x_{0n}\\
  \vdots& \vdots& \vdots& \vdots&\vdots\\
  x_{0n}&x_{1n}-x_{01}x_{0n}&x_{2n}-x_{02}x_{0n}&\ldots &x_{nn}-x_{0n}^2
  \end{pmatrix}=    
    \begin{pmatrix} 1&0&0&\ldots &0\\
 x_{01}&y_{11}&y_{12}&\ldots &y_{1n}\\
  \vdots&\vdots&\vdots&\vdots&\vdots\\
  x_{0n}&y_{1n}&y_{2n}&\ldots &y_{nn}
  \end{pmatrix},
  \] 
So the $(k-1)$--secant variety to  $V_{2,n}$ is mapped by $\phi$ 
    to the cone over the $(k-2)$--secant variety of $V_{2,n-1}$ with
    vertex along the linear image of $V_{2,n}$ in $\P^ {\frac {n(n+3)}2}$.

\begin{example}\label{ex:ver} The secant variety $\Sec(V_{2,2})$ of the Veronese surface in $\P^5$ is mapped to the cone over the conic $V_{2,1}=\{y_{11}y_{22}-y_{12}^2=0\} \subset\P^2=\{y_{00}=y_{10}=y_{20}=0\}$ with vertex $\P^2=\{y_{11}=y_{12}=y_{22}=0\}$.\end{example}

In general, the \emph{$d$--Veronese variety} $V_{d,n}$ of $\P^ n$ is
embedded in $\P^ {\binom{n+d}n-1}$ with coordinates $[x_{i_0\ldots
  i_n}]_ {i_0+\ldots+ i_n=d}$, with $i_j\geqslant 0$ for $ 0\leqslant
j\leqslant n$.  Its projection from the linear space $\{x_{i_0\ldots
  i_n}=0\}_{i_0\geqslant d-1}$ to the $n$--space $\{x_{i_0\ldots
  i_n}=0\}_{i_0<d-1}$, is birational. Accordingly, we can find a
Cremona linearizing map. We will treat the curve case in Section~\ref
{sec:rnc} but we will not dwell on the higher dimensional and higher
degree cases.

\subsection{Grassmannians
  of lines}
\label{sec:grassm-vari}
In this section we present Cremona linearizations of Grassmannians of
lines. Analogous linearizations exist for higher Grassmannians, an
example of which we treat in Section~\ref {ssec:grass}.

Let $V$ be a complex  vector space of dimension $n$. 
We can identify $V$ with $\C^n$, once we fix a basis $({\bf e}_0,\ldots, {\bf e}_{n-1})$ of $V$. 
The \emph{Pl\"ucker embedding} maps the Grassmannian $G(2,n)$ of $2$--dimensional vector subspaces (i.e. \emph{$2$--planes}) of $V$  into $\P^{\frac{n(n-1)}{2}-1}=\P(\wedge^ 2V)$, which we  identify with the projective space associated to the vector space of antisymmetric matrices of order $n$, thus 
the coordinates are $[x_{ij}]_{0\leq i<j\leq n-1}$. 

Two vectors  
\[{\bf \xi}_0=(\xi_{00},\ldots, \xi_{0,n-1}),\,\,\, {\bf
  \xi}_1=(\xi_{10},\ldots, \xi_{1,n-1})\] in $V$ that span a
$2$--plane $W$ form the rows of a $2\times n$--matrix $x$, whose
minors are independent on the chosen points, up to a nonzero common
factor.  The \emph{Pl\"ucker point} associated to $W$ is
$[x_{ij}]_{0\leq i<j\leq n-1}$, where $x_{ij}$ denotes the minor of
$X$ obtained by choosing the $i$-th\ and $j$-th columns.

The \emph{Pl\"ucker ideal} $I_{2,n}$ is the homogeneous ideal of
$G(2,n)$ in its Pl\"ucker embedding. This ideal is prime and it is
generated by quadrics. More precisely, $I_{2,n}$ is generated by the
$\binom{n}{4}$ \emph{three terms P\"ucker relations}
\begin{equation}
x_{ij}x_{kl}-x_{ik}x_{jl}+x_{il}x_{jk} \qquad \text{ for } 0\leqslant
i<j<k<l\leqslant n-1.\label{eq:PluckerRel}
\end{equation}

Using~\eqref{eq:PluckerRel}, in the open affine $\{x_{01}\neq 0\}$, we
have parametric equations for $G(2,n)$: the parameters are the $2n-4$
coordinates $x_{ij}$ with $i=0,1$, and the equations for the remaining
coordinates are
\[
x_{ij}=x_{0i}x_{1j}-x_{0j}x_{1i},\,\,\, \text {for}\,\,\, 2\leqslant i<j\leqslant n-1.
\]
Hence $G(2,n)$ is rational, and a birational map $G(2,n)\dasharrow \P^
{2n-2}$ is given by projecting $G(2,n)$ from the linear span $\P^
{\frac {n(n-5)}2+2}$ of $G(2,n-2)$ viewed inside $G(2,n)$ 
as the 
Grassmannian of 2--planes in $V'=\langle {\bf
  e}_2, \ldots, {\bf e}_{n-1}\rangle\subset V$.

According to Section \ref {ssec:lincrem}, we have a triangular $(2,2)$--Cremona linearization 
 $\varphi\colon
\P^{\frac{n(n-1)}{2}-1}\dashrightarrow \P^{\frac{n(n-1)}{2}-1}$ of $G(2,n)$, given in affine coordinates by
\begin{equation*}
y_{ij}=  
\begin{cases}
 x_{ij} &\text{ if } i=0,1,\; 2\leq j\leq n-1,\\
x_{ij}-x_{0i}x_{1j}+x_{0j}x_{1i} &\text{ if } \,\,\, 2\leqslant i<j\leqslant n-1.
\end{cases}
\end{equation*}
The reduced fundamental locus is $\{x_{01}=0\}$, and the indeterminacy
locus is the union of the two linear spaces 
$\{x_{01}=x_{02}=\ldots=x_{0(n-1)}=0\}$ and $\{x_{01}=x_{12}=\ldots =x_{1(n-1)}=0\}$.
    
On the complement of $\{x_{01}= 0\}$, the Grassmannian $G(2,n)$ is the
set of rank $2$ matrices of the form
     \[
  x=\begin{pmatrix} 0&1&x_{02}&x_{03}&\ldots&x_{0n}\\ -1
       &0&x_{12}&x_{13}&\ldots &x_{1n}\\ -x_{02}
       &-x_{12}&0&\ddots &\vdots&\vdots \\ -x_{03}
       &-x_{13}&\ddots&\ddots&\ddots&\vdots\\ \vdots
       &\vdots&\vdots&\ddots&\ddots &x_{n-2,n-1}
       \\-x_{0,n-1}&-x_{1,n-1}&-x_{2,n-1}&\vdots&-x_{n-2,n-1}&0\end{pmatrix}.
    \]
Performing suitable column operations on
 $x$ and using the $y$--coordinates, we see that the rank of $x$  is 2 plus the rank of the matrix
\[\begin{pmatrix} 
0&y_{23}& \ldots&\ldots&y_{2,n-1}\\ 
 \vdots &\vdots&\vdots&\vdots&\vdots\\\
   \vdots &\vdots&\vdots&0&y_{n-2,n-1}\\
-y_{2,n-1}&\ldots&\ldots&-y_{n-2,n-1}&0\end{pmatrix}.
    \]
   
Since $\Sec_k(G(2,n))$ is the set of antisymmetric matrices of rank $2k+2$, we see that $\Sec_k(G(2,n))$
is mapped by $\varphi$  to the cone over $\Sec_{k-1}(G(2,n-2))$ with vertex along the linear image of
$G(2,n)$.

\begin{example}\label{ex:grass} The first secant and tangent variety of $G(2,6)$ coincide and are defined by the Pfaffian cubic polynomial
\begin{align*}
&      x_{01}(x_{23}x_{45}- x_{24}x_{35}+ x_{25}x_{34})-x_{02}(x_{13}x_{45}
      -x_{14}x_{35}+x_{15}x_{34})+x_{03}(x_{12}x_{45}- x_{14}x_{25}+
      x_{15}x_{24})
    \\
  &    -x_{04}(x_{12}x_{35}-x_{13}x_{25}+x_{15}x_{23})+x_{05}(x_{14}x_{23}-x_{13}x_{24}+x_{12}x_{34})=0.\label{eq:2}
    \end{align*}
    In the $y$--coordinates, this hypersurface has a much
    simpler equation, namely the Pl\"ucker equation of $G(2,4)$
    \[ y_{23}y_{45}- y_{24}y_{35}+y_{25}y_{34}=0.
    \]
\vspace{-5ex}
\end{example}

\vspace{3ex}

\begin{example} A different Cremona transformation that linearizes $G(2,n)$ was considered in \cite{GHPRS}, namely
%
    \[
   y_{0i}\mapsto \frac{1}{x_{0i}}\quad i=1,\ldots,n-1, \qquad y_{ij}\mapsto \frac{x_{ij}}{x_{0i}x_{0j}}\quad 1\leqslant i<j\leqslant n-1.    \]
  It maps $G(2,n)$ to the linear space defined by 
   \[ y_{ij}-y_{ik}+y_{jk}=0 \qquad 1\leqslant i<j<k\leqslant n-1.
   \]
    This transformation is studied   in \cite{GHPRS} to compare various
    notions of convexity for lines.    
    \end{example}

  \subsection{Severi varieties}
  \label{sec:severi-varieties}
  The Veronese surface $V_{2,2}$, the Segre variety $\Segre(2,2)$ and
  the Grassmannian $G(2,6)$ mentioned in Examples \ref {ex:ver}, \ref
  {ex:seg} and \ref {ex:grass} are \emph{Severi varieties},
  i.e. smooth 1--defective varieties of dimension $n$ in $\P^ {\frac
    32n+2}$ (see \cite {Zak1}).  There is one more Severi variety: the
  so--called \emph{Cartan variety} of dimension 16 embedded in $\P^
  {26}$.
      
  Let $X$ be a Severi variety. It is known that $X$ is swept out by a
  $n$--dimensional family $\mathcal Q$ of $\frac n2$--dimensional
  smooth quadrics, such that, given two distinct points $x,y\in X$,
  there is a unique quadric of $\mathcal Q$ containing $x,y$.  If
  $Q\in \mathcal Q$, the projection of $X$ from the linear space
  $\langle Q\rangle$ of dimension $\frac n2+1$ to $\P^ n$ is
  birational and, as usual by now, we get a Cremona linearization $\phi$ of $X$. 
  
  Being $X$ defective, its tangent and   first secant varieties coincide. 
  By Example~\ref{ex:tan} we see
      that $\phi$ maps $T(X)={\rm Sec}(X)$ to the cone over $Q$ with vertex the $n$--dimensional
      linear image of $X$. This agrees with the
      contents of the previous examples and applies to the Cartan
      variety as well.

   \subsection{Rational normal curves}
\label{sec:rnc}

Let $V_n:=V_{1,n}$ be the rational normal curve of degree $n$ in $\P ^n$
\[
V_n=\{[t^n, st^{n-1}, \ldots, s^{n-1}t, s^n] \,:\, [s,t]\in \P^1\} \subset \P^n.
\]
Let $[x_0, \ldots, x_n]$ be the coordinates of $\P^n$. Then
$V_n$ is the determinantal variety
           \[
\rk \begin{pmatrix} x_0&x_1&\ldots &x_{n-1}\\  
  x_1&x_2&\ldots&x_n\\  
 \end{pmatrix}=1.
    \]

Assume $n=2k$ is  even (similar considerations can be made in the odd case). Then, we can linearize $V_n$ via the
following affine triangular $(2,2)$--Cremona map $\phi$ on $\{x_0\neq 0\}$
\[
y_{i}=
\begin{cases}
x_1 & \text{ if } i=1,\\
  x_i-x_{i-1}x_1 &\text{ if }i>1 \text { and } i \text{ is odd},\\
x_i-x_{\frac i2}^2  &\text{ otherwise.}
\end{cases}
\]
The $\phi$--image of $V_n$ is the linear space $\{y_{2}=y_3=\ldots=y_n=0\}$.
The reduced fundamental locus is $\{x_0=0\}$ and the indeterminacy locus is $\{x_0=x_1=\ldots=x_{k}=0\}$

The secant variety $\Sec(V_n)$ is defined by the $3\times 3$-minors of  the $3\times(n-1)$ \emph{catalecticant} matrix
 \[
\begin{pmatrix} x_0&x_1&\ldots &x_{n-2}\\  
 x_1&x_2&\ldots&x_{n-1}\\
  x_2&x_3&\ldots&x_{n}\\  
 \end{pmatrix},
    \]
   (see \cite {dol}), where we as usually set $x_0=1$. Using column operations, this matrix can be transformed into the following one expressed in terms of the $y$--coordinates
    \[
\begin{pmatrix} 1&0&\ldots &0\\  
 y_1&y_2&\ldots&y_{n-1}\\
  y_2+y_1^ 2&y_3&\ldots&y_{n}\\  
 \end{pmatrix}.
    \]
    This shows that $\Sec(V_n)$ is mapped by $\phi$ to the cone over a
    $V_{n-2}$ with vertex the line $\{y_2=\ldots=y_n=0\}$ which is the
    $\phi$--image of $V_n$. A similar situation occurs for all higher
    secant varieties of $V_n$. For instance, $\Sec_{k-1}(V_n)$ is the
    hypersurface defined by the catalecticant determinantal equation
    of degree $k+1=\frac n2+1$
  
   \[
\det\begin{pmatrix} x_0&x_1&\ldots &x_{k}\\  
 x_1&x_2&\ldots&x_{k+1}\\
 \ldots& \ldots& \ldots& \ldots\\
 x_k&x_{k+1}&\ldots&x_{n}\\  
 \end{pmatrix}=0.
    \] 
    Hence, $\phi$ maps $\Sec_{k-1}(V_n)$ to the cone over
    $\Sec_{k-2}(V_{n-2})$ with vertex the $\P^1$ obtained as the
    $\phi$--image of $V_n$.

  \subsection{The Grassmannians $G(3,6)$}\label {ssec:grass}
Let  $X=(x_{ij})_{1\leqslant i,j\leqslant 3}$ and $Y=(y_{ij})_{1\leqslant i,j\leqslant 3}$ be $3\times 3$-matrices, and let  
\[
[ x_0,X,Y,y_0]
\] 
be coordinates in $\P^{19}$.

Let $A=(a_{ij})_{1\leqslant i,j\leqslant 3}$ be a $3\times 3$ matrix.  We denote by $A_{ij}$ the minor of $A$ obtained by deleting row $i$ and column $j$, so that
\[ \wedge ^ 2A= (A_{ij})_{1\leqslant i,j\leqslant 3},\,\,\, \text {and}\,\,\, \wedge^ 3A=\det (A).\]
We  parametrize  $G(3,6)$ as follows:
    \[
    (I_3|A)\in \C^ 9\mapsto (1, A, \wedge^2 A, \wedge^3
A)\in \{x_{0}\not=0\}\subset\P^{19}.
\]
This parametrization is precisely the inverse of the birational projection of $G(3,6)$ from its tangent space at the point $[0,0,0,1]$.

By our discussion in Section~\ref {ssec:lincrem}, this gives rise to a
family of triangular Cremona transformations linearizing $G(3,6)$
\[\phi: [ x_0,X,Y,y_0]\in \P^{19}\dasharrow [z_0, Z,W,w_0]\in \P^{19}\]
where $Z=(z_{ij})_{1\leqslant i,j\leqslant 3}$ and $W=(w_{ij})_{1\leqslant i,j\leqslant 3}$.

We can view the determinant of $A$ in two ways: as a cubic polynomial
in the entries of $A$ and as a bilinear quadric form in the variables
$(a_{ij}, A_{ij})$. This yields different Cremona transformations, one
defined by quadrics whose inverse transformation is defined by cubics
(a \emph{quadro--cubic} transformation), the other defined by cubics
with the inverse also defined by cubics (a \emph{{cubo--}cubic}
transformation).

Let us start with the quadro-cubic transformation $\phi$.  On
$\{x_0\not=0\}$ it is defined by
 \begin{equation*}
  \label{eq:quadroG36}
   z_{ij}=x_{ij},\quad w_{ij}=y_{ij}-X_{ij}, \quad
  w_{0}=y_{0} - \sum_{i=1}^3 (-1)^{i+1} x_{1i}y_{1i}.
\end{equation*}
The reduced fundamental locus is $\{x_0=0\}$ and the indeterminacy locus is $\{x_0=x_{ij}=0\}$.
The inverse of $\phi$, on $\{z_0\not=0\}$,  is given by
\begin{equation}\label{eq:inv}
x_{ij}=z_{ij}, \quad y_{ij}=w_{ij}+Z_{ij}, \quad  y_0=w_0+\sum_{i=1}^3 (-1)^{i+1} z_{1i}(w_{1i}+Z_{1i}).
\end{equation}

The  {cubo--}cubic Cremona transformation $\psi$  is given on the affine set $\{x_0\not=0\}$ by the following expressions
\begin{equation*}
  \label{eq:cuboG36}
z_{ij}=x_{ij},\quad w_{ij}=y_{ij}-X_{ij}, \quad
  w_{0}=y_{0} - \det(X).
\end{equation*}
Its reduced fundamental locus is $\{x_0=0\}$, and its  indeterminacy locus is $\{x_0=x_{ij}=0\}$.
The inverse Cremona transformation restricted to $\{z_0\not=0\}$ is defined by
\begin{equation*}
  \label{eq:invcuboG36}
 x_{ij}=z_{ij},\quad y_{ij}=w_{ij}+Z_{ij}, \quad
  y_{0}=w_{0} + \det(Z).
\end{equation*}
The image of $G(3,6)$ under both $\phi$ and $\psi$ is the linear space
defined by $\{W=0, w_0=0\}$.

It is known that  $\Sec(G(3,6))=\P^{19}$ (see \cite {donagi77}),  while $T(G(3,6))$ is the quartic hypersurface 
defined by 
 \[
 P=(x_{0}y_{0}-{\rm tr} (XY))^2+4x_{0}\det(Y)+4y_{0}\det(X)-4\sum_{i,j}\det(X_{ij})\det(Y_{ji})=0,
 \]
 (see \cite[p. 83]{SK}).  We find the equation of $\phi(T(G(3,6)))$ by
 plugging \eqref {eq:inv} in $P$ (where $x_0=1$). We obtain a degree
 6 equation
\[
z_{13}^4z_{22}^2-2z_{12}z_{13}^3z_{22}z_{23}+ \quad {\rm appr.}\;\,\, 600 \quad {\rm terms}=0.
\]

Analogously, for  $\psi(T(G(3,6)))$ we obtain
\[
z_{13}^4z_{22}^2-2z_{12}z_{13}^3z_{22}z_{23}+ \quad {\rm appr.}\;\,\,   600 \quad {\rm terms}=0.
\]

Since $T(G(3,6))$ is singular along $G(3,6)$, the same happens for the
above two sextics along $\{W=0, w_0=0\}$. In any event, none of these
two linearizing Cremonas simplify the equation of $T(G(3,6))$, which
actually becomes more complicated.

Similar considerations can be done for Grassmannians $G(n,2n)$ with
$n\geq 4$.

 \section{Cumulant Cremonas}\label{sec:cum}
 
 As we saw in Section \ref{sec:examples}, there are several examples
 in which a Cremona linearization of a rational variety simplifies the
 equations of its secant varieties.  Here is another instance of this
 behavior.
 
 \begin{example}\label{ex:3segre} 
   Consider the Segre embedding $\Sigma_n$ of $(\P^1)^ n$ in $\P^{2^
     n-1}$.  In particular take the case $n=3$. Then, $\Sigma_3$ is
   parametrically given by the equations
   \[x_1=t_1,\quad x_2=t_2,\quad x_3=t_3,\quad x_4=t_1t_2, \quad
   x_5=t_1t_3, \quad x_6=t_2t_3, \quad x_7=t_1t_2t_3.\] We have
   $\Sec(\Sigma_3)=\P^ 7$, whereas $T(\Sigma_3)$ is a hypersurface of
   degree four in $\P^7$. Its defining equation is the so called
   \emph{hyperdeterminant} (see \cite {GKZ}).

   The linearizing Cremona transformation $\phi$ defined in
   (\ref{eq:basiccremona}) maps $\Sigma_3$ to the linear space
   $x_4=\ldots=x_7=0$. Following (\ref{eq:param4}), the variety
   $T(\Sigma_n)$ is mapped under $\phi$ to a (symmetric) degree four
   hypersurface with defining equation
 \[x_3^ 2x_4^ 2+ x_2^ 2x_5^ 2+x_1^ 2x_6^ 2+ 2(x_1x_2x_5x_6+
 x_1x_3x_4x_6+ x_2x_3x_4x_5)+ 4x_4x_5x_6
 -2x_7(x_1x_6+x_3x_4+x_2x_5)+x_7^ 2=0.\]   
\vspace{-5ex}\end{example}

In this case the
 linearization process simplifies the equation of $T(\Sigma_3)$, but the degree remains the same.  The question is:
 can we find a linearizing Cremona for $\Sigma_3$ which lowers the degree of $T(\Sigma_3)$? An affirmative answer to this question is given by \emph{cumulant Cremonas} arising from algebraic
statistics. Indeed,
 this family of Cremonas gives the following very simple equation for the image of $T(\Sigma_3)$ (see \cite[(2.1)]{pwz-2011-bincum})
\[x_7^ 2+4x_4x_5x_6=0.\]

\subsection{Binary cumulants}

 We  recall the setting of binary cumulants from~\cite{pwz-2011-bincum}.  Let
 $\Pi(I)$ denote the set of all nonempty set partitions of
 $I\subseteq[n]:=\{1,\ldots,n\}$. We write $\pi=B_{1}|\cdots|B_{r}$
 for a typical element of $\Pi(I)$, where all $\emptyset\neq B_{i}\subset I$ are
 the unordered disjoint \emph{blocks} of $\pi$, with $I=\cup_{i=1}^ rB_i$. For example, if $n=3$, then
$$
\Pi([3])=\{123,\;1|23,\; 2|13,\; 3|12,\; 1|2|3\}.
$$ 
We denote by $|\pi|$ the number of blocks of $\pi \in \Pi(I)$.

Consider two copies of
$\P^{2^{n}-1}=\P(\C^{2}\otimes\cdots\otimes\C^{2})$ with coordinates  $[x_{I}]_{I\subseteq [n]}$
and $[y_{I}]_{I\subseteq [n]}$.  Following 
\cite{pwz-2011-bincum}, we define the \emph{(binary) cumulant Cremona transformation}  or the  \emph{(binary) cumulant change of coordinates} 
  \[\psi\colon [x_{I}]_{I\subseteq [n]}\in \mathbb{P}^{2^{n}-1}\dasharrow [y_{I}]_{I\subseteq [n]}\in 
\mathbb{P}^{2^{n}-1}\] 
via the formula
\begin{equation}\label{eq:cumul}
 y_{\emptyset}=x_{\emptyset}^{n}, 
\quad \,\,\, \text{and}\,\,\,\quad
y_{I}=\sum_{\pi\in \Pi(I)}(-1)^{|\pi|-1}(|\pi|-1)!\;x_{\emptyset}^{n-|\pi|}\prod_{B\in\pi} x_{B} \qquad \text{for }\emptyset \neq I\subseteq [n],
\end{equation}
where the product
in~\eqref{eq:cumul} is taken over all blocks $B$ of $\pi$ (we will call $[y_{I}]_{I\subseteq [n]}$ the \emph{cumulant coordinates}).  
 Note that $I$ is the
maximal element in the poset $\Pi(I)$, hence $\psi$ is a triangular
Cremona transformation. It linearizes $\Sigma_n$, which is mapped to the linear space
$\{y_I=0\}_{|I|\geqslant 2}$ (see \cite[Remark
3.4]{pwz-2011-bincum}), and $T(\Sigma_n)$ is 
 toric in the cumulants coordinates (see \cite[Theorem 4.1]{pwz-2011-bincum}).

The inverse map of $\psi$ is given by the standard M\"{o}bius
inversion formula for the partition lattice $\Pi([n])$  
(see \cite[Proposition 3.7.1]{stanley2006enumerative})
\begin{equation*}\label{eq:inversecumul}
  x_{\emptyset}=y_{\emptyset}^{n},\quad \,\,\, \text{and}\,\,\,\quad
  x_{I}=\sum_{\pi\in \Pi(I)}y_{\emptyset}^{n-|\pi|}\prod_{B\in\pi} y_{B}\qquad\text{for }I\subseteq [n].
\end{equation*}
Both maps are morphisms on the open affine subsets
$\{x_{\emptyset}\neq 0\}$ and $\{y_{\emptyset}\neq 0\}$, respectively.

\begin{example}
  Fix $n=2$. Then 
  \[y_{\emptyset}=x_{\emptyset}^{2}, \,\,\, 
  y_{1}=x_{\emptyset}x_{1}, \,\,\, y_{2}=x_{\emptyset}x_{2}\\, \,\,\, y_{12}=x_{\emptyset}x_{12}-x_{1}x_{2},\]
  which coincides with \eqref{eq:SegreNN} in this case. 
The inverse is given by
  \[x_{\emptyset}=y_{\emptyset}^{2}, \,\,\, x_{1}=y_{\emptyset}y_{1},
  x_{2}=y_{\emptyset}y_{2},\,\,\, x_{12}=y_{\emptyset}y_{12}+y_{1}y_{2}.\]
The {fundamental locus} is $\{x_{\emptyset}^{3}=0\}$.

  Let  $n=3$. Then
  \[
\begin{array}{lr}
y_{\emptyset}=x_{\emptyset}^{3},\,\,\, 
  y_{i}=x_{\emptyset}^{2}x_{i}, \,\,\, \text {for}\,\,\, 1\leqslant i\leqslant 3, \,\,\,  
  y_{ij}=x_{\emptyset}^{2}x_{ij}-x_{\emptyset}x_{i}x_{j}, \,\,\, \text {for}\,\,\,   1\leq
i<j\leq 3\\
y_{123}=x_{\emptyset}^{2}x_{123}-x_{\emptyset}x_{1}x_{23}-x_{\emptyset}x_{2}x_{13}-x_{\emptyset}x_{3}x_{12}+2x_{1}x_{2}x_{3}.\\
\end{array}
\]
The inverse is
\[
\begin{array}{lr}
x_{\emptyset}=y_{\emptyset}^3,\,\,\, 
  x_{i}=y_{\emptyset}^2y_i, \,\,\, \text {for}\,\,\, 1\leqslant i\leqslant 3, \,\,\,  
  x_{ij} = y_{\emptyset}(y_{\emptyset}y_{ij}+y_iy_j), \,\,\, \text {for}\,\,\,   1\leq
i<j\leq 3\\
x_{123}=y_{\emptyset}^2y_{123}+ y_{\emptyset}
(y_{12}y_3+y_{13}y_2+y_{23}y_1)+ y_1y_2y_3.\\
\end{array}
\]
The {fundamental locus}  is now
$\{x_{\emptyset}^{8}=0\}$.
\end{example} 

\subsection{Linearizing higher Segre varieties}
The above construction can be generalized to
$\Segre(r_1,\ldots,r_k)\subset \P^ r$ with $r+1=\prod_{i=1}^
3(r_i+1)$, for any $k\geq 2$ and $r_1\geqslant \ldots \geqslant
r_k\geqslant 1$.  The case $k=2$ has been treated in Section
\ref{sec:segre-varieties}. If $k=3$, let  $[x_{ijk}]_{0\leq i\leq
  r_1,0\leq j\leq r_2,0\leq k\leq r_3}$ be the coordinates on $\P^r$. Define a Cremona transformation by
\begin{eqnarray*}
& y_{000}=x_{000}^3,\quad y_{i00}=x_{000}^2x_{i00},\quad y_{0j0}=x_{000}^2x_{0j0},\quad y_{00k}=x_{000}^2x_{00k},&\\
&y_{ij0}=x_{000}(x_{000}x_{ij0}-x_{i00}x_{0j0}),\quad y_{i0k}=x_{000}(x_{000}x_{i0k}-x_{i00}x_{00k}),\quad y_{0jk}=x_{000}(x_{000}x_{0jk}-x_{0j0}x_{00k}),\quad &\\
& y_{ijk}=x_{000}^2x_{ijk}-x_{000}x_{i00}x_{0jk}-x_{000}x_{0j0}x_{i0k}-x_{000}x_{00k}x_{ij0}+2x_{i00}x_{0j0}x_{00k}, &
\end{eqnarray*}
where $i,j,k\geq 1$. This  linearizes ${\rm Seg}(r_1,r_2,r_3)$  by mapping it to the linear space  $\{y_{ijk}=0\}$ for all triples $(i,j,k)\in \prod _{i=1}^ 3\{0,\ldots,r_i\}$ with at least two nonzero coordinates. 

This generalizes to any $k$ as follows (see \cite[Sections 7 and
8]{MOZ}).  Let $S(\mathbf{i})\subseteq [n]$ be the support of
$\mathbf{i}=(i_1,\ldots,i_k)\in \prod_{i=1}^k \{0,\ldots,r_i\}$, i.e.\
the set of coordinates of nonzero entries in $\mathbf{i}$. For every
$B\subseteq[k]$, we define the $k$--tuple $\mathbf{i}(B)$ that agrees
with $\mathbf{i}$ on those indices in $B$ and is zero otherwise.  We
define the Cremona transformation $\psi\colon \P^{r}\dashrightarrow
\P^{r}$ by the formulas
$$
y_\mathbf{i}=\sum_{\pi\in\Pi(S(\mathbf{i}))}(-1)^{|\pi|-1}(|\pi|-1)!\,\, x_{0\cdots 0}^{n-|\pi|}\prod_{B\in \pi} x_{\mathbf{i}(B)},\,\,\, \text{for all} \,\,\, \mathbf{i}\in \prod_{i=1}^k
\{0,\ldots,k_i\}.
$$  
The image of ${\rm Seg}(r_1,\ldots,r_k)$
lies in the subspace $\{y_\mathbf{i}=0\}_{|S({\bf i})|\geqslant 2}$. This can be shown by mimicking the proof of Theorem \ref{thm:lincrem} below, so we leave the proof to the reader.

\subsection{$\mathcal L$-cumulant Cremonas}

One of the advantages of working with cumulants is that the change of
coordinates is conveniently encoded by the cumulant generating
function \cite{pwz-2011-bincum}. However, to fully exploit the involved
combinatorics, we will generalize cumulants to situations
when such a generating function is not known. As we will see,
$\mathcal L$-cumulants, introduced in \cite {pwz-2010-cumulants}, enjoy this property.

First we show how the homogeneous binary cumulant change of
coordinates generalizes. We replace the partition lattice $\Pi(I)$ by
a \emph{partial order set (poset)} $(P, <_P)$ (or simply $(P,<)$ if
there is no danger of confusion) with its associated \emph{M\"obius
  function} $\mu_P$. The function $\mu_P\colon P\times P\rightarrow
\mathbb{Z}$ (or simply $\mu$) is recursively defined by
$\mu(\pi,\pi)=1$ for all $\pi \in P$, $\mu(\pi, \nu)=0$ if $\pi \not<
\nu$, and
$$
\mu(\pi,\nu)=-\sum_{\pi\leq\delta<\nu}\mu(\pi,\delta),\quad\text{for
  all }\pi< \nu \text{ in } P.
$$
The two main features of this function that we will use in the rest of
this section are the \emph{M\"obius inversion formula} and the
\emph{product theorem}, which we now recall. Even though they hold in
a more general setting, we state them for finite posets, since this
will suffice for our purposes.
\begin{proposition}[\textbf{M\"obius inversion  formula
  }~{\cite[Proposition
  3.7.1]{stanley2006enumerative}}] \label{pr:MobiusInversion}
  Let $(P, <)$ be a finite poset and $f,g\colon P\to \C$. Then
\[
g(x)=\sum_{y\leq x} f(y)\quad \text{ for all } x\in P \qquad \text{
  if and only if } \qquad f(x)=\sum_{y\leq x} g(y)\mu(y,x) \quad
\text{ for all } x\in P.
\]
\end{proposition}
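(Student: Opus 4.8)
The plan is to recognize the stated equivalence as nothing more than the assertion that, in the incidence algebra of $(P,<)$, the function $\mu$ is the two-sided inverse of the zeta function, and then to obtain both directions of the inversion formula by a single interchange of summation order. First I would introduce the incidence algebra $I(P)$: the $\C$-vector space of functions $\alpha$ defined on pairs $x\leq y$ in $P$, equipped with the convolution product $(\alpha\ast\beta)(x,y)=\sum_{x\leq z\leq y}\alpha(x,z)\beta(z,y)$. Since $P$ is finite this product is associative, its identity is the function $\varepsilon$ with $\varepsilon(x,y)=1$ if $x=y$ and $0$ otherwise, and the algebra contains the zeta function $\zeta$ with $\zeta(x,y)=1$ for all $x\leq y$. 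The recursive definition of $\mu$ given above rearranges to $\sum_{x\leq z\leq y}\mu(x,z)=\varepsilon(x,y)$, that is, $\mu\ast\zeta=\varepsilon$; because the recursion fixes the \emph{first} argument, it produces $\mu$ only as a one-sided inverse of $\zeta$.

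The main obstacle is to upgrade this to a two-sided inverse, i.e.\ to establish the dual orthogonality $\sum_{x\leq z\leq y}\mu(z,y)=\varepsilon(x,y)$, equivalently $\zeta\ast\mu=\varepsilon$. This does not follow formally from the one-sided relation, and the recursion is not symmetric in its two arguments. I would handle it by choosing a linear extension of $<$, i.e.\ a total order on $P$ refining the partial order; with respect to it every element of $I(P)$ is represented by an upper-triangular matrix and $\zeta$ by a unitriangular one. Such a matrix is invertible with a genuine two-sided inverse, and in an associative unital algebra a one-sided inverse of an invertible element automatically coincides with that inverse. Hence $\mu\ast\zeta=\varepsilon$ forces $\zeta\ast\mu=\varepsilon$ as well, which is exactly the dual orthogonality relation. (Alternatively one can prove this relation by a direct induction along the linear extension, but the incidence-algebra argument is the cleanest.)

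With both orthogonality relations in hand, the equivalence is a routine finite computation. For the forward implication I would substitute $g(y)=\sum_{z\leq y}f(z)$ into $\sum_{y\leq x}\mu(y,x)g(y)$, interchange the order of summation to obtain $\sum_{z\leq x}f(z)\bigl(\sum_{z\leq y\leq x}\mu(y,x)\bigr)$, and collapse the inner sum using $\zeta\ast\mu=\varepsilon$, which leaves $f(x)$. The converse is identical in structure: substituting $f(y)=\sum_{z\leq y}\mu(z,y)g(z)$ into $\sum_{y\leq x}f(y)$ and swapping sums yields $\sum_{z\leq x}g(z)\bigl(\sum_{z\leq y\leq x}\mu(z,y)\bigr)$, and here the inner sum collapses by the free relation $\mu\ast\zeta=\varepsilon$, leaving $g(x)$. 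Since every interval of $P$ is finite, no convergence questions arise and the proof is complete.
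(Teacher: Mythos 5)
The paper gives no proof of this proposition---it is quoted verbatim from Stanley with a citation---so there is nothing internal to compare against; your argument is correct and is the standard incidence-algebra proof, essentially the one in the cited source. You also handle the one genuinely delicate point correctly: the recursion only yields $\mu\ast\zeta=\varepsilon$ directly, and upgrading $\mu$ to a two-sided convolution inverse of $\zeta$ (needed for the forward implication, which collapses via $\zeta\ast\mu=\varepsilon$) requires the invertibility of $\zeta$, which your unitriangular-matrix representation along a linear extension supplies.
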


\begin{theorem}[\textbf{Product theorem}~{\cite[Proposition
     3.8.2]{stanley2006enumerative}}]
    \label{thm:ProductTheorem}
    Let $(P, <_P)$ and $(Q, <_Q)$ be finite posets and let 
    $(P\times Q, <)$ be their product, with order given coordinatewise,
    i.e.\ $(p,q)\leq (p', q')$ if and only if $p\leq_P p'$ and
    $q\leq_Q q'$. If $(p,q)\leq (p', q')$ in $P\times Q$, then
\[
\mu_{P\times Q}((p,q), (p',q'))=\mu_{P}(p,p')\,\mu_Q(q,q').
\]
\end{theorem}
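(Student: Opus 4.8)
The plan is to exploit the fact that, on any finite poset, the M\"obius function is \emph{uniquely} determined by the two conditions $\mu(x,x)=1$ for all $x$ and $\sum_{x\leq z\leq y}\mu(x,z)=0$ whenever $x<y$ (the latter being just the defining recursion of $\mu$ with the top term moved to the left-hand side). These relations compute $\mu(x,y)$ by induction on the number of elements of the interval $[x,y]$, so any function satisfying them must coincide with $\mu$. Accordingly, I would define a candidate function $\nu$ on $P\times Q$ by $\nu((p,q),(p',q')):=\mu_P(p,p')\,\mu_Q(q,q')$ when $(p,q)\leq(p',q')$ and $\nu:=0$ otherwise, and then verify that $\nu$ satisfies these two characterizing conditions. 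By uniqueness this forces $\nu=\mu_{P\times Q}$, which is exactly the claimed formula.

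The crucial structural observation, and the only place where the coordinatewise order enters, is that intervals in the product poset factor as products of intervals: for $(p,q)\leq(p',q')$ one has
\[
[(p,q),(p',q')]\;=\;[p,p']\times[q,q'],
\]
since $(p,q)\leq(a,b)\leq(p',q')$ holds if and only if $p\leq a\leq p'$ and $q\leq b\leq q'$. This identity is precisely what allows a double sum over an interval of $P\times Q$ to separate into a product of two independent sums.

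With this in hand the verification is immediate. First, $\nu((p,q),(p,q))=\mu_P(p,p)\,\mu_Q(q,q)=1$. Second, for $(p,q)<(p',q')$ I would compute, using the interval factorization to separate the sum,
\[
\sum_{(p,q)\leq(a,b)\leq(p',q')}\nu((p,q),(a,b))
=\Big(\sum_{p\leq a\leq p'}\mu_P(p,a)\Big)\Big(\sum_{q\leq b\leq q'}\mu_Q(q,b)\Big).
\]
Each factor equals $1$ if its two endpoints coincide and $0$ otherwise, by the defining property of $\mu_P$ and of $\mu_Q$. Since $(p,q)<(p',q')$ means that $p\neq p'$ or $q\neq q'$, at least one factor vanishes, so the total sum is $0$, as required.

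There is no serious obstacle here: the argument is a clean application of the uniqueness characterization together with the interval factorization. The only point demanding a little care is the uniqueness statement itself, which I would justify by induction on the cardinality of $[x,y]$ (legitimate since $P\times Q$ is finite, hence all its intervals are). One could alternatively phrase the whole proof inside the incidence algebra, recognizing $\nu$ as the two-sided inverse of the zeta function $\zeta_{P\times Q}$, but the direct recursive verification above is shorter and uses only the definition of $\mu$ already recalled.
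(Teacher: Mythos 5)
Your proof is correct; note that the paper itself gives no proof of this statement, citing it directly from Stanley (Proposition 3.8.2). Your argument --- verifying that $\nu((p,q),(p',q')):=\mu_P(p,p')\,\mu_Q(q,q')$ satisfies the defining recursion via the interval factorization $[(p,q),(p',q')]=[p,p']\times[q,q']$ and invoking uniqueness of the M\"obius function --- is precisely the standard proof given in that reference, so there is nothing to compare beyond confirming it is complete and sound.
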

\noindent 
For further basic results concerning M\"{o}bius functions we refer the reader
to \cite[Chapter 3]{stanley2006enumerative}. Some of them will be
recalled later on in this section.

The set partitions of a given nonempty set form a poset, where the order $<$
corresponds to refinement, that is, $\pi<
\nu$ if $\pi$ refines $\nu$. To generalize cumulant Cremonas, we replace
$\Pi([n])$ by a subposet $\mathcal{L}$ containing the maximal
and minimal elements of $\Pi([n])$, i.e., the partitions 
\[
\hat{0}=1\vert \ldots\vert n\,\,\, \text {and}\,\,\,
\hat{1}=[n].
\]
These two elements coincide if and only if $n=1$. 

Let us fix such an $\mathcal{L}$.  For each $I\subseteq [n]$, we
construct a subposet $\mathcal{L}(I)$ of $\Pi(I)$ by restricting each
partition in $\mathcal{L}$ to $I$. In particular,
$\mathcal{L}([n])=\mathcal{L}$.  Each poset $\mathcal{L}(I)$ has an
associated M\"obius function. To simplify notation, we denote all of
them by $\mu$. Similarly, we denote the maximal and the minimal
element of each poset by $\hat{0}$ and $\hat{1}$, so $\hat{1}=I$ in
$\mathcal{L}(I)$. The identification will be clear from the context.

Given $\mathcal{L}$, we define a map $\psi_{\mathcal{L}}\colon
\P^{2^{n}-1}\dasharrow \P^{2^{n}-1}$ as
\begin{equation}\label{eq:cremonagen}
  y_{I}=  \begin{cases}
\sum_{\pi\in
      \mathcal{L}(I)}\mu(\pi,\hat{1})\;x_{\emptyset}^{n-|\pi|}\prod_{B\in\pi}
    x_{B}& \quad \text{if }I\neq \emptyset,\\
x_{\emptyset}^{n} & \quad \text{otherwise.}\\
  \end{cases}
\end{equation}
Here, $B\in \pi$ if it is a block of the partition.  Note that 
\[
y_i=x_\emptyset ^ {n-1} x_i, \,\,\, \text {for all}\,\,\, i\in [n],\,\,\, \text {and},\,\,\, y_{ij} = x_\emptyset ^ {n-2}(x_\emptyset x_{ij}-x_ix_j),
\]
do not depend on $\mathcal{L}$. Since $\hat{1}\in \mathcal{L}$, we
know that $I$ is the maximal element of $\mathcal{L}(I)$ for every
$I\subset [n]$. This implies that $\psi_{\mathcal{L}}$ is a triangular
Cremona transformation. It is defined over the open set
$\{x_{\emptyset}\neq 0\}$. We call such a map an \emph{$\mathcal
  L$-cumulant Cremona}.  Its fundamental locus is
$\{x_\emptyset^{n^2-1}=0\}$.

\begin{example} If
$\mathcal{L}=\Pi([n])$, the M\"obius function satisfies
$\mu(\pi,\hat{1})=(-1)^{|\pi|-1}(|\pi|-1)!$, so we recover the
cumulant change of coordinates in~\eqref{eq:cumul}. 

To the other extreme, if $n>1$ and $\mathcal{L}=\{\hat{0},\hat{1}\}$, then \eqref {eq:cremonagen} becomes
\begin{equation*}\label{eq:cremonagenspec}
  y_{I}=  \begin{cases}
 x_\emptyset^ {n-1} x_I-x_\emptyset^ {n-|I|} \prod_{i\in I}  x_i
& \quad \text{if }I\neq \emptyset,\\
x_{\emptyset}^{n} & \quad \text{otherwise},\\
  \end{cases}
\end{equation*}
which is  the linearizing Cremona of $\Sigma_n$  arising, as in \eqref {eq:basiccremona}, from the affine parametrization of $\Sigma_n$ given by
\[
x_I= \prod_{i\in I}  t_i, \,\,\, \text {for}\,\,\, \emptyset \neq I\subseteq [n]\,\,\, \text {and} \,\,\, (t_1,\ldots,t_n)\in \C^ n.
\]\vspace{-5ex}

\end{example}

\begin{example}[\textbf{Interval partitions of
    {$[n]$}}]\label{ex:interval} Fix a positive integer $n$
  and let $\mathcal{L}$ be the set of \emph{interval partitions} of
  $[n]$, ordered by refinement.  An interval partition of $[n]$ is
  obtained by cutting the sequence $1,2,\ldots,n$ into
  subsequences. For example, there are four interval partitions on
  $[3]$, i.e., $123$, $1|23$, $12|3$ and
  $1|2|3$. 

  The interval partitions form a poset
  isomorphic to the Boolean lattice of a set of $n-1$ elements. In
  particular, its M\"{o}bius function satisfies
  $\mu(\pi,\hat{1})=(-1)^{|\pi|-1}$ (c.f.~\cite[Example
  3.8.3]{stanley2006enumerative}). If $n=3$, this gives the following
  formulas for the map $\psi_{\mathcal{L}}$ from~\eqref{eq:cremonagen}
\[
\begin{array}{c}
y_{\emptyset}=x_{\emptyset}^3, \quad
y_i = x_{\emptyset}^2x_i \; (i=1,2,3), \quad
y_{12}=x_{\emptyset}^{2}x_{12}-x_{\emptyset}x_{1}x_{2}, \quad y_{13}=x_{\emptyset}^{2}x_{13}-x_{\emptyset}x_{1}x_{3}, \quad y_{23}=x_{\emptyset}^{2}x_{23}-x_{\emptyset}x_{2}x_{3},\\ 
y_{123}=x_{\emptyset}^{2}x_{123}-x_{\emptyset}x_{1}x_{23}-x_{\emptyset}x_{3}x_{12}+x_{1}x_{2}x_{3}.
\end{array}
\]
\vspace{-5ex}

\end{example}

The formula for the inverse of $\psi_{\mathcal{L}}$ over
$\{y_{\emptyset}\neq 0\}$ follows by the standard M\"{o}bius inversion
formula on each poset $\mathcal{L}(I)$. Let us show how to do this. 
For every $\pi\in \Pi(I)$ we set
\begin{equation}\label{eq:x}
x_{\pi}:=\prod_{B\in \pi}x_B.\end{equation} 
Given  
$I\subseteq [n]$ and $\nu\in \mathcal{L}(I)$, we define
\begin{equation}\label{eq:ynu}
y_\nu:=\sum_{ \substack{\pi\leq \nu\\\ \pi\in \mathcal{L}(I)}}\mu(\pi,\nu)\, x_\pi.
\end{equation}
By
the M\"{o}bius inversion formula on $\mathcal{L}(I)$, we conclude that 
\begin{equation}\label{eq:partialInverse}
x_\nu=\sum_{\substack{\pi\leq \nu\\\pi\in \mathcal{L}(I)}}y_\pi\qquad
\text{ for all } I\subset [n]\,\,\, \text {and}\,\,\, \nu\in \mathcal{L}(I).
\end{equation}
In particular
\begin{equation}\label{eq:invLcum}
x_I=\sum_{\pi\in \mathcal{L}(I)}y_\pi,  \,\,\,\text{ for all } I\subset [n].
\end{equation}

The following lemma ensures that, for each $\nu \in \mathcal{L}(I)$,
$y_{\nu}$ is a polynomial in the variables $y_J$'s with $J\subseteq
I$. It also show that~\eqref{eq:invLcum} and yields an explicit
formula for $\psi_{\mathcal{L}}^{-1}$.

  \begin{lemma}\label{lm:recursiony_pi}
    For each $I\subset [n]$ and each $\nu\in \mathcal{L}(I)$, the
    variable $y_{\nu}$ is a polynomial in $y_{J}$'s where $J$ runs
    over all subsets of each one of the blocks of $\nu$.
  \end{lemma}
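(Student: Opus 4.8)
The plan is to prove the statement by a double induction: an outer induction on the size $|I|$ of the ground set, and, for each fixed $I$, an inner induction along the poset $\mathcal{L}(I)$ that treats finer partitions $\pi$ (those with more blocks) before coarser ones. Concretely, I would order the pairs $(I,\nu)$ by the lexicographic value $(|I|,\,|I|-|\nu|)$, which is well-founded and makes every instance invoked below strictly smaller. The base case $|I|=1$ is immediate: then $\mathcal{L}(I)=\{\{i\}\}$ and $y_\nu=y_i$ is already one of the admissible variables.

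The engine of the induction is the recursion obtained from \eqref{eq:partialInverse}. Writing out $x_\nu=\sum_{\pi\leq\nu}y_\pi$ and isolating the top term gives
\[
y_\nu=x_\nu-\sum_{\substack{\pi<\nu\\ \pi\in\mathcal{L}(I)}}y_\pi .
\]
If $\nu=\hat{1}=I$ is the one-block partition, then $y_\nu=y_I$ is itself an admissible variable and there is nothing to prove, so I may assume $\nu$ has blocks $B_1,\dots,B_r$ with $r\geq 2$, each satisfying $|B_i|<|I|$. I would then treat the two terms on the right separately.

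For the term $x_\nu$, I use \eqref{eq:x} to factor $x_\nu=\prod_{i=1}^r x_{B_i}$ and then \eqref{eq:invLcum}, applied to each ground set $B_i$, to write $x_{B_i}=\sum_{\rho\in\mathcal{L}(B_i)}y_\rho$. Since $|B_i|<|I|$, the outer induction hypothesis applies to each pair $(B_i,\rho)$ and shows that every $y_\rho$ is a polynomial in the variables $y_J$ with $J$ contained in a block of $\rho$; as the blocks of $\rho$ are subsets of $B_i$, these are variables $y_J$ with $J\subseteq B_i$. Thus each $x_{B_i}$, and therefore $x_\nu$, lies in the polynomial ring generated by the $y_J$ with $J$ contained in a block of $\nu$. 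For the sum $\sum_{\pi<\nu}y_\pi$, each $\pi$ is a partition of the \emph{same} set $I$ but strictly finer than $\nu$, hence strictly lower for the inner induction; the inductive hypothesis gives that $y_\pi$ is a polynomial in the $y_J$ with $J$ contained in a block of $\pi$. Because $\pi<\nu$ means $\pi$ refines $\nu$, every block of $\pi$ is contained in a block of $\nu$, so each such $J$ is again contained in a block of $\nu$. Combining the two contributions yields the claim for $(I,\nu)$.

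I expect the point most easily gotten wrong to be the bookkeeping in the inner recursion: the term $\sum_{\pi<\nu}y_\pi$ cannot be reduced by the outer induction, since it involves partitions of the same ground set $I$, and must instead be controlled by the poset induction together with the observation that refinement can only shrink the admissible index sets $J$. A conceptual subtlety worth flagging is that one should \emph{not} expect the stronger multiplicative identity $y_\nu=\prod_{i}y_{B_i}$: this would require the interval $[\hat{0},\nu]$ in $\mathcal{L}(I)$ to factor as $\prod_i\mathcal{L}(B_i)$ and then invoke Theorem~\ref{thm:ProductTheorem}, but such a factorization fails in general because $\mathcal{L}$ need not be closed under recombining independently chosen restrictions to the blocks $B_i$. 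This is precisely why the lemma is phrased in terms of arbitrary \emph{subsets} of the blocks rather than the blocks themselves, and the recursive argument above is robust enough to avoid relying on any product decomposition.
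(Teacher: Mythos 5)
Your proof is correct and follows essentially the same route as the paper: M\"obius inversion on $\mathcal{L}(I)$ combined with the block factorization $x_\pi=\prod_{B\in\pi}x_B$ and induction on the subsets of $[n]$. The only organizational difference is that the paper substitutes the full defining sum $y_\nu=\sum_{\pi\le\nu}\mu(\pi,\nu)\prod_{B\in\pi}\bigl(\sum_{\tau\in\mathcal{L}(B)}y_\tau\bigr)$ directly, so every index set that appears is a proper subset of $I$ and a single induction on $I$ suffices, whereas your rearrangement $y_\nu=x_\nu-\sum_{\pi<\nu}y_\pi$ forces the additional inner induction along the poset; both arguments are valid.
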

  \begin{proof}
We prove the result by induction on the subsets of $[n]$. If
$I=\emptyset$, there is nothing to prove since
$y_{\emptyset}=1$. Suppose that  $I\supsetneq \emptyset$ and that the
result holds for all $J\subsetneq I$.
    If $\nu$ is a one block partition, there is nothing to
    prove. Assume that $\nu$ contains more than one
    block. By ~\eqref {eq:x},~\eqref{eq:ynu} and~\eqref{eq:partialInverse} we
    obtain
\[
y_{\nu}=\sum_{\pi\leq \nu}\mu(\pi, \nu)\prod_{B\in \pi}(\sum_{\tau\in \mathcal{L}(B)}y_{\tau}).
\]
Since all $B$'s on the right-hand side are strictly
included in $I$, the result follows by induction.
  \end{proof}

As it happens with the homogeneous cumulant change of
coordinates~\eqref{eq:cumul}, the map
$\psi_{\mathcal{L}}$ linearizes  $\Sigma_n$:

\begin{theorem}\label{thm:lincrem}
  For any choice of $\mathcal{L}$, the map $\psi_{\mathcal{L}}$
  from~\eqref{eq:cremonagen} linearizes $\Sigma_n$. Its image is the
  linear space $\Pi:=\{y_{I}=0\}_{I\subseteq [n], |I|\geq 2}$.
\end{theorem}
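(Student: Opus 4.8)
The plan is to evaluate $\psi_{\mathcal{L}}$ directly on the affine parametrization of $\Sigma_n$, check that every coordinate $y_I$ with $|I|\ge 2$ vanishes, and then obtain equality of the images by a dimension count. Since $\psi_{\mathcal{L}}$ is a morphism on the open set $\{x_\emptyset\neq 0\}$ and $\Sigma_n$ meets this set, it is defined at the general point of $\Sigma_n$, so I may work with the parametrization $x_\emptyset=1$, $x_I=\prod_{i\in I}t_i$ for $\emptyset\neq I\subseteq[n]$.

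First I would substitute this parametrization into \eqref{eq:cremonagen}. The crucial point is that for every partition $\pi\in\mathcal{L}(I)$ the blocks of $\pi$ partition $I$, whence
\[
\prod_{B\in\pi}x_B=\prod_{B\in\pi}\prod_{i\in B}t_i=\prod_{i\in I}t_i=x_I.
\]
Combined with $x_\emptyset=1$, this collapses \eqref{eq:cremonagen} to
\[
y_I=x_I\sum_{\pi\in\mathcal{L}(I)}\mu(\pi,\hat{1}),\qquad \emptyset\neq I\subseteq[n],
\]
so the whole statement reduces to computing the M\"obius sum $\sum_{\pi\in\mathcal{L}(I)}\mu(\pi,\hat{1})$.

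To evaluate this sum I would apply Proposition~\ref{pr:MobiusInversion} to the constant function $g\equiv 1$ on the poset $\mathcal{L}(I)$, which yields the standard identity $\sum_{\pi\in\mathcal{L}(I)}\mu(\pi,\hat{1})=\delta_{\hat{0},\hat{1}}$. For this I must know that $\mathcal{L}(I)$ has a least element $\hat{0}$ and a greatest element $\hat{1}=I$; this is guaranteed because $\mathcal{L}$ contains both extremal partitions of $\Pi([n])$, and their restrictions to $I$ are exactly the all-singletons partition and the one-block partition of $I$. When $|I|\ge 2$ these differ, so the sum vanishes and $y_I=0$, proving $\psi_{\mathcal{L}}(\Sigma_n)\subseteq\Pi$. (For $|I|=1$ one recovers $y_i=x_i$, consistent with the formula $y_i=x_\emptyset^{n-1}x_i$ noted after \eqref{eq:cremonagen}.)

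Finally I would settle equality by dimension. The linear space $\Pi=\{y_I=0\}_{|I|\ge 2}$ retains only the coordinates $y_\emptyset,y_1,\ldots,y_n$, so $\Pi\cong\P^n$ has dimension $n=\dim\Sigma_n$. As $\psi_{\mathcal{L}}$ is a Cremona transformation, it is birational and generically injective; its restriction to $\Sigma_n$ is therefore birational onto an irreducible $n$-dimensional subvariety of $\Pi$, which must be all of $\Pi$. The one genuinely delicate step is the M\"obius computation, and in particular checking that $\mathcal{L}(I)$ inherits the bounding elements $\hat{0}$ and $\hat{1}$ so that the vanishing identity applies; the substitution and the dimension count are routine.
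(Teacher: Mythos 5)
Your proof is correct and follows essentially the same route as the paper's: substitute the parametrization of $\Sigma_n$, observe that $\prod_{B\in\pi}x_B$ is independent of $\pi\in\mathcal{L}(I)$, reduce everything to the vanishing of $\sum_{\pi\in\mathcal{L}(I)}\mu(\pi,\hat{1})$ when $\hat{0}\neq\hat{1}$ in $\mathcal{L}(I)$ (equivalently $|I|\geq 2$), and conclude equality of images from birationality and the dimension count $\dim\Pi=n$. The only cosmetic differences are that the paper works in homogeneous coordinates $a_I$ rather than on the affine chart $x_\emptyset=1$, and establishes the M\"obius identity via the dual poset (Lemma~\ref{lm:PosetIdentity}) instead of via M\"obius inversion applied to $g\equiv 1$.
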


\begin{proof}
  Denote by
  $a_{{i}}=[a_{i0},a_{i1}]$ the coordinates of the $i$--th copy of
  $\P^{1}$ in $(\P^{1})^{n}$.  The Segre embedding $\sigma_n\colon
  (\P^1)^n\to \P^{2^n-1}$, 
  maps $a=(a_{1}, \ldots, a_{n})$ to the point in $\P^{2^n-1}$
  whose $I$--th coordinate is  
  \[a_I=\prod_{i\in
    I}a_{i1}\prod_{i\notin I}a_{i0},\,\,\,\text{for every}\,\,\, I\subseteq [n].\]

  We compute $\psi_{\mathcal{L}} \circ
  \sigma_n$ using~\eqref{eq:cremonagen}. For every  $I \subseteq [n]$ and  every partition $\pi \in
  \mathcal{L}(I)$ we have 
$$ a_{\emptyset}^{n-|\pi|}\prod_{B\in \pi}a_{B}=\prod_{i\in I} (a_{i0}^{n-1}a_{i1}) \prod_{i\notin I} a_{i0}^{n}$$ 
which does not depend on $\pi$. Therefore, the $I$--th coordinate of
$\psi_{\mathcal{L}}(\sigma_n(a))$ is 
\begin{equation}\label{eq:lin}
 b_{I}=\big(\prod_{i\notin I} a_{i0}^{n}\prod_{i\in I} (a_{i0}^{n-1}a_{i1})\big) \sum_{\pi\in \mathcal{L}(I)} \mu(\pi,\hat{1}).
  \end{equation}
  If $|\mathcal{L}(I)|\geq 2$, Lemma~\ref{lm:PosetIdentity} below,
  applied to $P=\mathcal{L}(I)$, yields $\sum_{\pi\in \mathcal{L}(I)}
  \mu(\pi,\hat{1})=0$.  Combining this fact with~\eqref{eq:lin}, we
  conclude that the image of $\Sigma_n$ via $\psi_{\mathcal{L}}$ is
  contained in the linear space $\{y_I=0\}_{|\mathcal{L}(I)|\geq
    2}$. Note that since $\hat{1}$ and $\hat{0}$ lie in $\mathcal{L}$,
  the condition $| \mathcal{L}(I)|\geq 2$ is equivalent to $|I|\geq
  2$.  So this linear space is $\Pi$, and it has dimension
  $n$. Moreover $\Sigma_n$ is not contained in the fundamental locus
  of $\psi_\L$, so the induced map $\psi_{\L\vert \Sigma_n}\colon
  \Sigma_n\dasharrow \Pi$ is birational.
\end{proof}

\begin{lemma}\label{lm:PosetIdentity}  Let $(P, \leq)$ be a finite
  poset of size at least two with unique maximal and minimal elements
  $\hat{1}, \hat{0}$. Let $\mu$ be its M\"obius function. Then,
\[
\sum_{x\in P}
\mu(x,\hat{1})=0.
  \]
\end{lemma}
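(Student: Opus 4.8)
The plan is to recognize the sum $\sum_{x\in P}\mu(x,\hat 1)$ as a ``column sum'' of the M\"obius function and to obtain it directly from the M\"obius inversion formula (Proposition~\ref{pr:MobiusInversion}), rather than from the defining recurrence, which instead yields the ``row sums'' $\sum_{\hat 0\le x\le y}\mu(\hat 0,x)$. The first thing I would record is that, since $\hat 0$ is the unique minimal element of $P$ and $\hat 1$ the unique maximal one, every $x\in P$ satisfies $\hat 0\le x\le\hat 1$. Thus the index set of the sum is exactly the interval $[\hat 0,\hat 1]=P$, and in particular $y\le\hat 1$ for all $y\in P$.

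With this in hand, I would apply Proposition~\ref{pr:MobiusInversion} to a carefully chosen pair of functions on $P$. Let $f\colon P\to\C$ be the indicator of the minimum, i.e.\ $f(\hat 0)=1$ and $f(x)=0$ for $x\neq\hat 0$, and set $g(x):=\sum_{y\le x}f(y)$. Because $\hat 0\le x$ for every $x\in P$, the only term contributing to this sum is $y=\hat 0$, so $g\equiv 1$ on $P$.

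By the M\"obius inversion formula, the identity $g(x)=\sum_{y\le x}f(y)$ is equivalent to $f(x)=\sum_{y\le x}g(y)\,\mu(y,x)=\sum_{y\le x}\mu(y,x)$ for all $x\in P$. Evaluating this at $x=\hat 1$ and using that $y\le\hat 1$ for every $y\in P$ gives $f(\hat 1)=\sum_{x\in P}\mu(x,\hat 1)$. Finally, since $|P|\ge 2$ forces $\hat 0\neq\hat 1$, we have $f(\hat 1)=0$, and the claim follows.

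There is essentially no obstacle here beyond spotting the right reformulation: the only point to notice is that the target is the column sum of $\mu$, which is not the form handed to us by the recurrence, and that the indicator of $\hat 0$ converts it into a clean application of Proposition~\ref{pr:MobiusInversion}. Alternatively, I could argue inside the incidence algebra: $\mu$ is by construction the left inverse of the zeta function, and since the zeta function is invertible (upper triangular with unit diagonal with respect to any linear extension of $\le$), this left inverse is automatically two-sided, which is exactly the column-sum relation $\sum_{\hat 0\le x\le\hat 1}\mu(x,\hat 1)=0$. The inversion-formula route above is shorter and uses only results already stated.
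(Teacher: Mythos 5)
Your proof is correct, and it takes a genuinely different route from the paper's. The paper passes to the dual poset $(P^*,\leq^*)$, uses the identity $\mu^*(x,y)=\mu(y,x)$ (citing Stanley), and then observes that the resulting expression $\mu^*(\hat 0^*,\hat 1^*)+\sum_{\hat 0^*\leq^* x<^*\hat 1^*}\mu^*(\hat 0^*,x)$ vanishes by the defining recurrence of $\mu^*$ --- in other words, it converts the column sum into a row sum by dualizing. You instead keep the column sum and extract it directly from the M\"obius inversion formula (Proposition~\ref{pr:MobiusInversion}) applied to the indicator function of $\hat 0$: since $\hat 0\leq x$ for every $x$ in a finite poset with unique minimal element, the summatory function $g$ is identically $1$, inversion gives $f(x)=\sum_{y\leq x}\mu(y,x)$, and evaluating at $x=\hat 1\neq\hat 0$ yields the claim. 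Both arguments are short and standard; yours has the mild advantage of invoking only a result already stated in the paper, while the paper's needs the extra fact about the dual M\"obius function but stays closer to the bare recursive definition. Your closing remark about the incidence algebra (the left inverse of the zeta function being automatically two-sided) is also a valid third route and is really the conceptual reason both proofs work.
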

\begin{proof}
  Consider the dual poset $(P^*, {\leq}^*)$ obtained by reversing the order
  in $(P, \leq)$. In particular, the roles of the minimal and maximal
  elements are exchanged, namely $\hat{0}^*=\hat{1}$ and
  $\hat{1}^*=\hat{0}$. The M\"obius function $\mu^*$ of $P^ *$ satisfies
  $\mu^*(x,y)=\mu(y,x)$ for all $(x,y)\in P\times P$ (see \cite[page
  120]{stanley2006enumerative}). Therefore
\[
\sum_{x\in P} \mu(x,\hat{1})=\mu(\hat{0}, \hat{1}) +\sum_{\hat{0}<x\leq \hat{1}}\mu(x, \hat{1})=
\mu^*(\hat{0}^*, \hat{1}^*)+ \sum_{\hat{0}^*{\leq^*}\, x{<^*}\,\hat{1}^*}
  \mu^*(\hat{0}^*, x) =0,
\]
where the last equality follows from the recursive definition of
$\mu^*$.
\end{proof}

\subsection{Secant cumulants}

As we mentioned earlier, one of the useful features of binary
cumulants is that the tangential variety of $\Sigma_n$, expressed in cumulants, 
becomes toric. This is not the case, in general, for $\mathcal{L}$--cumulant
Cremonas.
However, with a careful choice of the defining poset $\mathcal{L}$
one may obtain other desired properties. For example, if
${\mathcal{L}}$ is the poset of interval partitions of $[n]$ defined
in Example~\ref{ex:interval}, the Cremona transformation
$\psi_{\mathcal{L}}$ is an involution.

 The next example is
related to  ${\rm Sec}(\Sigma_n)$  (see \cite {MOZ}, \cite[Section 3.3]{pwz-2010-cumulants}).
\begin{example}\label{ex:secant}
  In what follows, we parametrize the secant variety
  ${\Sec}(\Sigma_n)$ inside $\P^{2^n-1}$ starting from the
  parametrization of $\Sigma_n$: $$p^{(0)}_I=\prod_{i\in [n]\setminus
    I}a_{i0}\prod_{i\in I}a_{i1},\quad p^{(1)}_I=\prod_{i\in
    [n]\setminus I}b_{i0}\prod_{i\in I}b_{i1}\qquad\mbox{for all
  }I\subseteq[n].$$


 Denote by $\mathbf{A}$ the affine subspace given by $x_\emptyset=1$. The affine variety ${\rm Sec}(\Sigma_n)\cap \mathbf{A}$ is parametrized by
\begin{equation*}\label{eq:paramnclaw}
x_I\,\,=\,\,(1-s_1)\prod_{i\in I} a_{i1}+s_1\prod_{i\in I} b_{i1}.
\end{equation*}

Consider a sequence of two $\mathcal L$-cumulant transformations. The first one corresponds to the
lattice $\mathcal{L}_1$ of all \emph{one-cluster partitions} of $[n]$, i.e.\
partitions with at most one block of size greater than one. The second
one comes from the lattice $\mathcal{L}_2$ of interval partitions of
$[n]$.  The first map $\psi_1\colon \mathbf{A}\rightarrow \mathbf{A}$ is
defined by
\begin{equation*}\label{eq:cremonaxtoy}
y_I=\sum_{A\subseteq I}(-1)^{|I\setminus A|}x_A\prod_{i\in I\setminus A}x_i, \quad\mbox{for }I\subseteq [n].
\end{equation*}
The second map $\psi_2\colon\mathbf{A}\rightarrow \mathbf{A}$ is given
by~\ref{eq:cremonagen}, i.e.
\begin{equation*}\label{eq:cremonaytoz}
z_I=\sum_{\pi\in \mathcal{I}(I)}(-1)^{|\pi|-1}\prod_{B\in \pi}y_B,\quad\mbox{for }I\subseteq [n].
\end{equation*}

To see how this sequence of maps can be written as a single $\mathcal
L$-cumulant transformation we refer to \cite{pwz-2010-cumulants}. By \cite[Lemma 3.1]{MOZ}, for every $I\subseteq [n]$ such that $|I|\geq 2$, the result of $\psi_2\circ \psi_1$ applied to $\Sec(\Sigma_n)$ is 
\begin{equation}
z_I\,\,\,=\,\,\,s_1(1-s_1)(1-2s_1)^{|I|-2}\prod_{i\in I} (b_{i1}-a_{i1}).\label{eq:6}
\end{equation}
Taking $d_i=(1-2s_1)(b_{i1}-a_{i1})$ for $i\in [n]$, and
$t=s_1(1-s_1)(1-2s_1)^{-2}$ in~\eqref{eq:6} we conclude
that that secant variety, when expressed in cumulants, becomes locally
toric with $z_I=t\prod_{i\in I}d_i$ for $|I|\geq 2$. \end{example}
\noindent 
This simple local description of  $\Sec(\Sigma_n)$ can be generalized to the secant variety of the Segre product of projective spaces of arbitrary dimensions. This gives  the following result:
\begin{theorem}[see \cite{MOZ}] The secant variety ${\rm Sec}(\Segre
  (r_1,\ldots,r_k))$ is covered by normal affine toric varieties. In
  particular, it has rational singularities. \end{theorem}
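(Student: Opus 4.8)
The plan is to deduce the theorem from an explicit local toric description of the secant variety, obtained by generalizing the secant-cumulant coordinates of Example~\ref{ex:secant} from the binary Segre $\Sigma_n$ to an arbitrary product. I would work on the affine chart $\mathbf{A}=\{x_{0\cdots 0}\neq 0\}$ of $\P^r$ and parametrize $\Sec(\Segre(r_1,\ldots,r_k))\cap\mathbf{A}$ by two points $a=(a_{j,\ell})$, $b=(b_{j,\ell})$ of the Segre (in the monomial parametrization of Section~\ref{sec:cum}, now with $1\leq \ell\leq r_j$ in each factor) together with a scalar $s$, exactly as in Example~\ref{ex:secant}. The $\mathcal{L}$-cumulant formalism of Theorem~\ref{thm:lincrem} and the generalized cumulant transformation for higher Segre varieties act only through the support $S(\mathbf{i})$, so they carry over verbatim to this setting.

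Next I would apply the composition $\psi_2\circ\psi_1$ of the one-cluster and the interval $\mathcal L$-cumulant Cremonas whose binary form is recorded in Example~\ref{ex:secant}. The essential input is the generalization of \cite[Lemma~3.1]{MOZ}, that is, of the identity \eqref{eq:6}: after this change of coordinates each cumulant coordinate with $|S(\mathbf{i})|\geq 2$ becomes a single monomial $z_{\mathbf{i}}=t\prod_{j\in S(\mathbf{i})} d_{j,i_j}$, where $t$ and the $d_{j,\ell}$ are explicit monomials in $s$ and in the differences $b_{j,\ell}-a_{j,\ell}$, while the coordinates with $|S(\mathbf{i})|\leq 1$ remain free. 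On this chart the secant variety is therefore the Zariski closure of the image of a monomial map, hence an affine toric variety, and its ideal is generated by the binomials $z_{\mathbf{i}}z_{\mathbf{i}'}-z_{\mathbf{j}}z_{\mathbf{j}'}$ recording coincidences among these monomials.

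The heart of the argument, and the step I expect to be the main obstacle, is to prove that each such toric chart is \emph{normal} and that the charts cover $\Sec(\Segre(r_1,\ldots,r_k))$. Normality reduces to showing that the affine semigroup generated by the exponent vectors $e_t+\sum_{j\in S(\mathbf{i})} e_{d_{j,i_j}}$, for $|S(\mathbf{i})|\geq 2$, is saturated in the lattice it spans; in the binary case this semigroup is precisely the set of lattice points of its real cone, and one must extend this matching to the multigraded, per-factor-constrained situation arising for general $r_j$, where at most one index $\ell$ contributes in each factor. For the covering, I would translate the chart by the standard torus of $\P^r$ and range over the affine opens attached to the coordinate points, using the symmetry that exchanges the distinguished vertex of each $\P^{r_j}$ with its other coordinate points; every point of the secant lies on a Segre chart to which the same construction applies.

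Finally, the passage to rational singularities is formal: a normal toric variety is Cohen--Macaulay with at worst rational singularities, by the classical theorem, and having rational singularities is a local property on a Zariski open cover. Since the previous steps exhibit $\Sec(\Segre(r_1,\ldots,r_k))$ as covered by normal affine toric charts, it has rational singularities, which completes the proof.
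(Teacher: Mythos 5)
Your route is the same one the paper points to: this theorem is stated with the attribution ``see \cite{MOZ}'' and is not actually proved in the paper, which only carries out the binary case in Example~\ref{ex:secant} (the composition $\psi_2\circ\psi_1$ of the one--cluster and interval $\mathcal L$--cumulant Cremonas and the resulting monomial form \eqref{eq:6} of $\Sec(\Sigma_n)$) and then asserts that the local toric description generalizes. So there is no methodological divergence to report; you are reconstructing the argument of \cite{MOZ}, and your covering step (all coordinate points of $\P^r$ lie on the Segre and are permuted transitively by the product of symmetric groups acting on the factors, so the charts $\{x_{\mathbf i}\neq 0\}$ are all equivalent and cover everything) and the final formal step (normal toric $\Rightarrow$ Cohen--Macaulay with rational singularities, and rationality is local) are both sound.

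There is, however, a genuine gap exactly where you flag one. The generalized secant--cumulant computation only exhibits each chart as the closure of the image of a monomial map, i.e.\ as a possibly \emph{non-normal} affine toric variety; the substantive content of the theorem is normality, which amounts to saturation of the semigroup generated by the exponent vectors $e_t+\sum_{j\in S(\mathbf i)}e_{d_{j,i_j}}$, $|S(\mathbf i)|\geq 2$, inside the lattice they span. You reduce to this statement but do not prove it, and it does not follow formally from the binary case: the per-factor constraint (in each factor $j$ at most one of the $r_j$ variables $d_{j,\ell}$ can occur in a given monomial) changes the combinatorics of the cone, and verifying saturation here is the core of the argument in \cite{MOZ}. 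Until that is done, neither ``normal'' nor the consequence about rational singularities is established. A minor further inaccuracy: the toric ideal of such a chart is binomial but need not be generated by the quadratic binomials you list.
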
 It turns
out that similar techniques can be applied to study the tangential
variety $T(\Segre (r_1,\ldots,r_k))$ (see \cite{MOZ} for details).

\bigskip

\noindent {\bf Acknowledgments:}{ We thank Mateusz Micha\l{}ek for his helpful comments on an early version of the paper. M. A. Cueto was partially supported
  by an AXA Mittag-Leffler postdoctoral fellowship (Sweden), by an
  Alexander von Humboldt Postdoctoral Research Fellowship and by an
  NSF postdoctoral fellowship DMS-1103857 (USA). C. Ciliberto and
  M. Mella have been partially supported by the Progetto PRIN
  ``Geometria sulle variet\`a algebriche'' MIUR.  P. Zwiernik was
  partially supported by an AXA Mittag-Leffler postdoctoral fellowship
  (Sweden), by Jan Draisma's Vidi grant from the Netherlands
  Organisation for Scientific Research (NWO) and by the European Union
  Seventh Framework Programme (FP7/2007-2013) under grant agreement
  PIOF-GA-2011-300975. This project started at the Institut
  Mittag-Leffler during the Spring 2011 program on ``Algebraic
  Geometry with a View Towards Applications.''  We thank IML for its
  wonderful hospitality.  }

\end{document}